\theoremstyle{plain}
\newtheorem{thm}{Theorem}
\newtheorem{prop}[thm]{Proposition}
\newtheorem{lem}[thm]{Lemma}
\newtheorem{coro}[thm]{Corollary}
\newtheorem*{thm4}{Theorem 4}
\theoremstyle{definition}
\newtheorem{defn}[thm]{Definition}
\newtheorem*{exa}{Example}
\theoremstyle{remark}
\newtheorem*{rem}{Remark}
\newtheorem*{claim*}{Claim}
\newtheorem{case}{Case}
\newcommand{\nil}{\varnothing}
\newcommand{\bfA}{\mathbf{A}}
\newcommand{\bfD}{\mathbf{D}}
\newcommand{\bfE}{\mathbf{E}}
\newcommand{\bfQ}{\mathbf{Q}}
\begin{document} 
%

%
%

\title[Fox reimbedding and Bing submanifolds]{Fox reimbedding and Bing submanifolds}
\author[K. Nakamura]{Kei Nakamura}
\address{Department of Mathematics\\ Temple University\\ Philadelphia, PA 19122}
\email{nakamura@temple.edu}
\subjclass[2000]{Primary 57N10, 57M27; Secondary 57N12, 57M50}

\begin{abstract} 
Let $M$ be an orientable closed connected $3$-manifold. We introduce the notion of \emph{amalgamated Heegaard genus} of $M$ with respect to a closed separating $2$-manifold $F$, and use it to show that the following two statements are equivalent: (i) a compact connected 3-manifold $Y$ can be embedded in $M$ so that the exterior of the image of $Y$ is a union of handlebodies; and (ii) a compact connected $3$-manifold $Y$ can be embedded in $M$ so that every knot in $M$ can be isotoped to lie within the image of $Y$.
\par Our result can be regarded as a common generalization of the reimbedding theorem by Fox \cite{Fox:Reimbedding} and the characterization of $3$-sphere by Bing \cite{Bing:S3}, as well as more recent results of Hass and Thompson \cite{Hass--Thompson:Bing} and Kobayashi and Nishi \cite{Kobayashi--Nishi:Bing}.
\end{abstract}

\maketitle

%
%

%
\section{Introduction} \label{Introduction}

This paper presents a common generalization of two well-known classical theorems regarding the topology of 3-manifolds. The first is a theorem of Fox, published in 1948, which is often referred to as  Fox reimbedding theorem in the modern literature:

\begin{thm}[\cite{Fox:Reimbedding}] \label{Fox}
Every compact connected $3$-submanifold $Y$ of the $3$-sphere can be reimbedded in the $3$-sphere so that the exterior of the image of $Y$ is a union of handlebodies, i.e. regular neighborhoods of embedded graphs.
\end{thm}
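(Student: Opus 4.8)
The plan is to argue Heegaard‑theoretically: reduce to connected boundary, then use Waldhausen's classification of Heegaard splittings of $S^{3}$. First I would reduce to the case that $\partial Y$ is connected. Let $X=\overline{S^{3}\setminus Y}$; each boundary surface $F_{i}$ of $Y$ separates $S^{3}$, and the closure of the component of $S^{3}\setminus F_{i}$ disjoint from $\mathrm{int}\,Y$ is a compact connected $3$-manifold $W_{i}$ with $\partial W_{i}=F_{i}$. One checks the $W_{i}$ are pairwise disjoint and $X=W_{1}\sqcup\dots\sqcup W_{n}$. Proceeding by induction on $n$, apply the connected‑boundary case to the submanifold $Y\cup W_{2}\cup\dots\cup W_{n}$ of $S^{3}$, whose only boundary surface is $F_{1}$; this reimbeds it so that $W_{1}$ becomes a handlebody, and since the reimbedding carries $Y$ and $W_{2},\dots,W_{n}$ along, $Y$ acquires one fewer non‑handlebody complementary piece. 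After $n$ steps the exterior of $Y$ is a union of handlebodies. So assume $\partial Y=F$ is connected of genus $g$ and write $S^{3}=Y\cup_{F}W$ with $W=\overline{S^{3}\setminus Y}$.

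Next I would choose Heegaard splittings $Y=C_{Y}\cup_{\Sigma_{Y}}H_{Y}$ and $W=C_{W}\cup_{\Sigma_{W}}H_{W}$ with $H_{Y},H_{W}$ handlebodies and $C_{Y},C_{W}$ compression bodies having $\partial_{-}C_{Y}=\partial_{-}C_{W}=F$; equivalently, $Y$ is $H_{Y}$ with $2$-handles attached along curves in $\partial H_{Y}$, and similarly for $W$. Amalgamating these splittings along $F$ in the standard way yields a Heegaard splitting of $S^{3}$ of genus $g(\Sigma_{Y})+g(\Sigma_{W})-g$. By Waldhausen's theorem this splitting is standard; since its genus exceeds the Heegaard genus $0$ of $S^{3}$, it is a stabilization of the trivial splitting $S^{3}=B^{3}\cup_{S^{2}}B^{3}$.

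The remaining step — which I expect to be the main obstacle — is to translate this standardness into a reimbedding of $Y$. Since the amalgamated splitting is a stabilization of the trivial one, it admits a sequence of destabilizations; the aim is to carry these out, together with isotopies of the Heegaard surface relative to $F$, so as to reach a position in which the side of $F$ away from $Y$ is displayed as a handlebody with the $2$-handles coming from $C_{Y}$ attached along an \emph{unknotted}, unlinked family of curves. Cutting that side off from $Y$ then leaves a genus-$g$ handlebody $V$ with $\partial V=F$, and $Y\cup_{\partial V}V\cong S^{3}$ is the desired reimbedding. The delicate point is bookkeeping which compressing curves on the Heegaard surface come from $C_{Y}$ and which from $C_{W}$, and their pattern of intersection, so as to guarantee that the piece split off from $Y$ is genuinely a handlebody rather than merely a compact $3$-manifold with free fundamental group; this is the technical heart of the proof, and where the embedding $Y\subset S^{3}$ — not just the existence of the handle decompositions — does its work.
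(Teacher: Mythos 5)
Your reduction to the case of connected boundary is sound, and it mirrors the paper's own reduction (Proposition~\ref{multiple boundary}). The serious problem is in the connected-boundary case, in the step you yourself flag as the ``main obstacle.'' Waldhausen's theorem asserts that the amalgamated Heegaard surface is \emph{isotopic} in $S^3$ to a standard one. But an ambient isotopy of $S^3$ carries $Y$ to an isotopic copy whose exterior is homeomorphic to the original $\overline{S^3\setminus Y}$; it cannot by itself produce a \emph{reimbedding} with a new, simpler exterior. Likewise, a destabilizing pair of disks for the amalgamated surface has no reason to be disjoint from $F$ or to interact with $F$ in a controlled way, so ``carrying out the destabilizations relative to $F$'' is not an available move without substantial further argument. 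What you would actually need is roughly this: with $Y=C_Y\cup_\Sigma H_Y$, $\partial_-C_Y=F$, and $\gamma_1,\dots,\gamma_k\subset\Sigma$ the boundaries of a minimal disk system for $C_Y$, there is a handlebody $V'$ with $\partial V'=\Sigma$ such that $H_Y\cup_\Sigma V'\cong S^3$ \emph{and} the $\gamma_i$ bound disjoint disks in $V'$. That second clause is the whole content of the theorem, and nothing in the Waldhausen-plus-destabilization outline delivers it.

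For comparison, the paper never invokes Waldhausen's theorem. It deduces Fox's theorem as the $M=S^3$ case of Theorem~\ref{main theorem} (every compact connected submanifold of $S^3$ is automatically a Bing submanifold). The engine behind Theorem~\ref{main theorem} is Proposition~\ref{single boundary}: choose, via Kobayashi--Nishi (Theorem~\ref{existence-char}), a $g(\partial Y)$-characteristic knot $K\subset Y$ with tunnel number $t(K)\geq g(M;\partial Y)$, then induct on $g(\partial Y)$ by compressing $\partial Y$ along disks in $E(K)$, using the Haken--Casson--Gordon lemma (Theorem~\ref{HCGHS}) and the monotonicity of the amalgamated Heegaard genus (Lemma~\ref{compressed AHG}) to keep control through the induction. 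So the amalgamated Heegaard genus enters as the inductive complexity measure, not as an input to Waldhausen's classification. Your proposal identifies the right objects (tunnel-type splittings of $Y$ and its exterior, amalgamation along $F$) but substitutes an appeal to Waldhausen for the characteristic-knot induction, and the bridge from ``the amalgamated splitting of $S^3$ is standard'' to ``$Y$ admits a handlebody-complement reimbedding'' is missing.
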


The second is a theorem of Bing, published in 1958, which gives a characterization of the $3$-sphere:

\begin{thm}[\cite{Bing:S3}] \label{Bing}
A closed connected $3$-manifold $M$ is homeomorphic to the $3$-sphere if and only if every knot in $M$ can be isotoped to lie within a closed $3$-ball.
\end{thm}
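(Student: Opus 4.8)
My plan for proving Theorem~\ref{Bing} would be as follows. The forward implication is elementary: any knot $K$ in $S^{3}$ avoids some point $p$, and $S^{3}\setminus\{p\}\cong\bbR^{3}$; since $K$ is compact it lies in a Euclidean ball there, which is a $3$-ball in $S^{3}$. So the content is the converse: assuming that every knot in $M$ can be isotoped into a $3$-ball, one must show $M\cong S^{3}$.

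The quickest route I see begins by observing that $\pi_{1}(M)=1$. Indeed, every free homotopy class in a $3$-manifold is represented by an \emph{embedded} knot, since a generic loop in a $3$-manifold has no self-intersections; and an isotopy of knots is in particular a free homotopy, so a knot that has been isotoped into a $3$-ball is null-homotopic there, hence in $M$, and hence its class in $\pi_{1}(M)$ is trivial. A simply connected closed $3$-manifold is $S^{3}$ by the Poincar\'e conjecture, which finishes the argument. Since this invokes geometrization, however, I would rather give — in the spirit of Bing's original proof and of the present paper — an argument that stays inside the world of Heegaard splittings.

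For such a proof, I would fix a minimal-genus Heegaard splitting $M=V\cup_{\Sigma}W$, of genus $g$, and suppose for contradiction that $g\geq 1$. The idea is to apply the shrinking hypothesis to a knot that records the handle structure of the splitting — for instance a core $c$ of one of the $1$-handles of $V$, together with a meridian disk $D$ of that handle (a properly embedded disk in $V$ with $\partial D$ essential on $\Sigma$ and $|c\cap D|=1$). Isotoping $c$ into a $3$-ball $B$ and carrying the splitting along, one then studies the $2$-sphere $\partial B$ against the surface $\Sigma$ and against the disk $D$, and simplifies the curves of intersection by the usual innermost-disk and outermost-arc surgeries; the aim is to show that $\partial D$ must then bound a disk on the side of $\Sigma$ facing $W$ as well, so that the splitting is reducible. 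Combined with Haken's lemma and the standard theory of reducible and stabilized Heegaard splittings, this should drive the genus down until it reaches $0$, forcing $M\cong S^{3}$.

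The step I expect to be the main obstacle is exactly this translation from ``the knot shrinks into a ball'' to ``the splitting is reducible.'' The isotopy that pushes $c$ into $B$ need not respect $\Sigma$ in any way, so relative to the fixed ball $B$ the surface $\Sigma$ and the disk $D$ can be badly positioned, and the curves $\partial B\cap\Sigma$ and $\partial B\cap D$ may be numerous and nested; simplifying the configuration far enough to expose a reducing sphere for the splitting — without covertly re-invoking the Poincar\'e conjecture — is the technical core of the matter, and is presumably where the amalgamated-Heegaard-genus construction of this paper, like the earlier arguments of Hass--Thompson and Kobayashi--Nishi, earns its keep.
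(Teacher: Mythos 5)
Your forward direction is fine, and your observation that the converse follows from the Poincar\'e conjecture is correct --- but it is beside the point, both because Bing's 1958 result predates Perelman by nearly half a century and because the whole interest of the theorem (and of the present paper) is to characterize $S^3$ by an isotopy condition on knots without appealing to $\pi_1$. So the real proposal is the Heegaard-splitting sketch, and there the gap you flag at the end is fatal --- but the fix is not to simplify intersection curves more carefully; it is to abandon that strategy and choose the knot differently.

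The knot you propose to isotope --- a core $c$ of a $1$-handle of $V$ --- is chosen for its relation to the splitting, not for the topology of its exterior, and that is backwards. Tracking an arbitrary isotopy of $c$ into a ball against the fixed surface $\Sigma$ is, as you yourself say, hopeless, and no amount of innermost-disk surgery on $\partial B\cap\Sigma$ will recover a reducing sphere from it. The mechanism Bing actually uses, which this paper inherits via Myers, Hass--Thompson, and Kobayashi--Nishi, is dual: one first constructs (this is the hard step, Theorem~\ref{existence-simple}) a knot $K$ in $M$ whose \emph{exterior} $E(K)$ is irreducible, i.e.\ every $2$-sphere in $E(K)$ bounds a ball there. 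Now apply the hypothesis: $K$ isotopes into a ball $B$, so after the isotopy the $2$-sphere $\partial B$ lies in $E(K)$; irreducibility forces $\partial B$ to bound a ball in $E(K)$, and since $K\subset B$ that ball must be $\overline{M-B}$, whence $M=B\cup_{\partial B}\overline{M-B}\cong S^3$. No Heegaard surface is mentioned, no isotopy is tracked, and the reducing-sphere machinery you invoke plays no role; all the difficulty is concentrated in building a knot with irreducible exterior, which is exactly what Lemma~\ref{sphere boundary} packages and what Bing's original construction achieves. Your sketch identifies the right symptom (an uncontrolled isotopy) but not the cure (control the knot's exterior, not the isotopy).
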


Fox reimbedding theorem was extended by Scharlemann and Thompson in \cite{Scharlemann--Thompson:Fox} where they proved a reimbedding theorem for a submanifold of irreducible non-Haken $3$-manifolds and also refined the reimbedding procedure so that the reimbedded submanifold and its exterior are aligned in a suitable sense; see also \cite{Menasco--Thompson:Fox} and \cite{Scharlemann:Fox} for related results.

Bing's theorem was followed by a few generalizations where the assumptions were weakened in a hope to prove Poincar\'{e} Conjecture; see \cite{McMillan:HomologicallyTrivial}, \cite{Myers:OpenBook}, \cite{Myers:SimpleKnots}, and \cite{Gordon--Montesinos:FiberedKnots} for example. In another direction, Hass and Thompson gave an analogous characterization of lens spaces in \cite{Hass--Thompson:Bing}: a closed connected orientable $3$-manifold $M$ is homeomorphic to a lens space (possibly $S^3$ or $S^2 \times S^1$) if and only if there exists a solid torus $V$ in $M$ such that every knot in $M$ can be isotoped to lie within $V$. Along this latter direction, Kobayashi and Nishi gave a further generalization in \cite{Kobayashi--Nishi:Bing}, settling the conjecture of Hass and Thompson affirmatively: a closed connected orientable $3$-manifold $M$ admits a genus $g$ Heegaard splitting if and only if there exists a genus $g$ handlebody $V$ in $M$ such that every knot in $M$ can be isotoped to lie within $V$. 

The properties of submanifolds that played crucial roles in the theorems above can be studied in a more general context of an arbitrary closed connected orientable $3$-manifold $M$ and a compact connected $3$-submanifold $Y$ of $M$.

\begin{defn}
Let $M$ be a closed connected oriented 3-manifold, and $Y$ be a compact connected submanifold of $M$.
\begin{itemize}
\item $Y$ is said to be a \emph{Fox submanifold} of $M$ if its exterior is a union of handlebodies. A reimbedding of $Y$ into $M$ is said be a \emph{Fox reimbedding} if the image of $Y$ is a Fox submanifold of $M$.
\item $Y$ is said to be a \emph{Bing submanifold} of $M$ if every knot in $M$ can be isotoped to lie within $Y$.
\end{itemize}
\end{defn}

Any Fox submanifold is always a Bing submanifold, while a Bing submanifold is usually not a Fox submanifold. Our main result is the following:

\begin{thm} \label{main theorem}
For any closed connected $3$-manifold $M$, every Bing submanifold of $M$ admits a Fox reimbedding into $M$; hence, a compact connected 3-manifold can be embedded in $M$ as a Fox submanifold if and only if it can be embedded in $M$ as a Bing submanifold.
\end{thm}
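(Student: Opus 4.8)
The implication that a Fox submanifold of $M$ is a Bing submanifold of $M$ is elementary and is noted in the text above, so the content is the converse: a Bing submanifold $Y$ of $M$ admits a Fox reimbedding. Write $X$ for the exterior of $Y$ and $F=\partial Y=\partial X$. The plan is to run the argument through the amalgamated Heegaard genus $g_a(M;G)$ of $M$ along a closed separating surface $G$ --- to be defined in the next section --- namely the least genus of a Heegaard surface of $M$ obtained as an amalgamation of a Heegaard splitting of one side of $G$ with one of the other side, each relative to $G$. Three properties will be used, all consequences of amalgamation theory: (a) every such amalgam is an honest Heegaard surface of $M$, so $g_a(M;G)\ge g(M)$ for every separating $G$; (b) $g_a(M;\partial Y)$ depends only on the submanifold $Y$, and, when $\partial Y$ is connected, equals $g(Y;\partial Y)+g(X;\partial Y)-g(\partial Y)$, where $g(Z;\partial Z)$ denotes relative Heegaard genus, with the appropriate modification when $\partial Y$ is disconnected; (c) in all cases $g_a(M;\partial Y)\ge g(Y;\partial Y)$, with equality precisely when the exterior $X$ is a union of handlebodies, that is, precisely when $Y$ is a Fox submanifold. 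Since $g(Y;\partial Y)$ is an invariant of the abstract manifold $Y$, unchanged by reimbedding, the theorem is equivalent to the assertion that some reimbedding $\iota$ of $Y$ realizes $g_a(M;\iota(\partial Y))=g(Y;\partial Y)$.

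Such a reimbedding will be produced by descent on $g_a$. Call a reimbedding of $Y$ \emph{admissible} if its image is again a Bing submanifold of $M$; the given embedding is admissible, and $g_a$ takes nonnegative integer values, so there is an admissible reimbedding $\iota_0$ minimizing $g_a(M;\iota(\partial Y))$ among all admissible $\iota$. Put $Y_0=\iota_0(Y)$, with exterior $X_0$ and $F_0=\partial Y_0$. I claim $X_0$ is a union of handlebodies --- by Paragraph~1 this finishes the proof. If not, then $g(X_0;F_0)>g(F_0)$, so a minimal-genus Heegaard splitting $X_0=\mathcal C\cup_\Sigma\mathcal H$ of $X_0$ relative to $F_0$, with $\mathcal C$ the nontrivial compression body meeting $F_0$ and $\mathcal H$ the handlebody side, has $\mathcal H$ of positive total genus; let $\gamma$ be a spine of $\mathcal H$. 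Because $Y_0$ is a Bing submanifold, every finite graph in $M$ --- the passage from knots to graphs being a routine tubing and innermost-disk argument --- can be isotoped into $Y_0$; in particular $\gamma$, which a priori lies deep inside $X_0$ beyond the compression body $\mathcal C$, can be isotoped all the way into $Y_0$. This is strictly stronger than the automatic fact that $\gamma$ is isotopic into the complement of $\mathcal H$, and it is this extra room that drives the descent: one converts such an isotopy of $\gamma$ into $Y_0$ into an admissible reimbedding $\iota_1$ of $Y$ whose exterior carries a relative Heegaard splitting of genus strictly below $g(\Sigma)$, so that $g_a(M;\iota_1(\partial Y))<g_a(M;\iota_0(\partial Y))$, contradicting minimality.

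The crux, and the step I expect to be the principal obstacle, is this surgery move: from the hypothesis that the spine $\gamma$ of the handlebody side of a \emph{minimal} relative Heegaard splitting of the exterior isotopes into $Y_0$, one must produce a reimbedding $Y_1\cong Y$ of $Y$ in $M$ whose exterior is obtained, in effect, by trading the handle structure of $\mathcal H$ across $F_0$, thereby strictly lowering the relative Heegaard genus of the exterior; and one must check that $Y_1$ remains a Bing submanifold. This is a relative, reimbedding-theoretic analogue of destabilization, and it is exactly at this point that the Bing hypothesis is exploited beyond the genus inequality it formally yields. I expect it to be carried out by sweepout and thin-position methods in the spirit of \cite{Kobayashi--Nishi:Bing} and \cite{Scharlemann--Thompson:Fox}, organized so as to track the behaviour of $g_a$ under the move.

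Two features of this scheme serve as checks on it. Carrying the descent to its end yields $g_a(M;\partial Y_0)=g(Y;\partial Y)$ together with $g_a(M;\partial Y_0)\ge g(M)$ by (a), hence $g(M)\le g(Y;\partial Y)$ for every Bing submanifold $Y$ of $M$; this specializes to the theorems of Bing, of Hass--Thompson, and of Kobayashi--Nishi when $Y$ is a ball, a solid torus, or a genus $g$ handlebody, respectively. And when $M=S^3$, every compact connected $3$-submanifold with nonempty interior is a Bing submanifold --- any knot can be shrunk into a small ball inside it --- so the theorem recovers Fox's reimbedding theorem.
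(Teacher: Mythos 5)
Your scheme shares with the paper the decision to control the reimbedding by an amalgamated Heegaard genus, and the sanity checks in your last paragraph (recovering Bing, Hass--Thompson, Kobayashi--Nishi and Fox) are all valid. But the proof structure is genuinely different from the paper's, and the step you flag as ``the principal obstacle'' is where the argument actually breaks, in two places.

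First, the passage from knots to graphs is \emph{not} a routine tubing or innermost-disk argument, and the paper is organized precisely to avoid ever needing it. The Bing hypothesis gives isotopies of a single circle at a time; there is no a~priori mechanism for isotoping a genus-$g$ spine $\gamma$ into $Y_0$, since the first Betti number of $\gamma$ obstructs squeezing it into a tubular neighborhood of any knot. Indeed, the honest way to upgrade the Bing property from knots to graphs would be to first prove the theorem (Fox submanifolds \emph{do} admit graph-Bing arguments via general position against the exterior spines), which makes the move circular. The paper sidesteps this entirely by using the Bing property exactly once --- to place a \emph{single} carefully chosen knot $K$ inside $Y$ --- and then never touching the Bing hypothesis again. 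The knot is taken to be $g_{\max}(\partial Y)$-\emph{characteristic} with $t(K)\ge g(M;\partial Y)$ (Kobayashi--Nishi's existence theorem, cited as Theorem~\ref{existence-char}), and it is the characteristic property, not Bing-ness, that produces the compression disks for $\partial Y$ driving the reimbedding.

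Second, even granting the isotopy of $\gamma$ into $Y_0$, your descent has no engine. Converting that isotopy into an admissible reimbedding $\iota_1$ with strictly smaller $g_a$ is stated as a hope; nothing is offered to show the new exterior carries a lower-genus relative splitting, and nothing is offered to show $\iota_1(Y)$ is still Bing. The latter is a genuine worry: Bing-ness is a property of the embedding, not the abstract manifold, and a reimbedding can destroy it. The paper avoids the need for this check by never requiring intermediate reimbeddings to be Bing --- Propositions~\ref{single boundary} and~\ref{multiple boundary} only require $K\subset Y$ with $K$ characteristic of high tunnel number, and both conditions are trivially preserved because the reimbeddings there are constructed to restrict to the identity on $K$. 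The induction in those propositions is on $g(\partial Y)$ and $|\partial Y|$, with $g(M;\partial Y)$ serving as a non-increasing bound to keep the tunnel-number hypothesis alive through the recursion, whereas you propose $g_a$ as the descent quantity itself. These are different roles, and the difference matters: the paper's four-case analysis (compression disk inside or outside $Y$, boundary of the disk separating or not) is precisely the content your sketch defers to ``sweepout and thin-position methods.''

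In short: the two claims ``knot-Bing implies graph-Bing'' and ``an isotopy of the exterior spine into $Y_0$ yields a Bing-preserving reimbedding of strictly smaller $g_a$'' are both unproved and both nontrivial; the first is essentially equivalent to the theorem, and the second is where the paper expends most of its effort via the characteristic-knot machinery you have not invoked.
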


With suitable choices of $M$ and $Y$, Theorem~\ref{main theorem} indeed specializes to the reimbedding theorem of Fox \cite{Fox:Reimbedding}, the characterization of the $3$-sphere by Bing \cite{Bing:S3}, the characterization of the lens spaces by Hass and Thompson \cite{Hass--Thompson:Bing}, and the characterization of manifolds admitting a genus $g$ Heegaard splitting by Kobayashi and Nishi \cite{Kobayashi--Nishi:Bing}. We also give a new characterization of manifolds admitting a genus $g$ Heegaard splitting and of manifolds admitting a cross-cap genus $g$ one-sided Heegaard splitting.

A reader may notice that the results in \cite{Bing:S3} and \cite{Hass--Thompson:Bing} predate the affirmative resolution of the \emph{Geometrization Conjecture}, from which one obtains much stronger characterizations of 3-sphere and lens spaces in terms of \emph{homotopy of loops}. It turns out that  Theorem~\ref{main theorem} cannot be strengthened by allowing homotopy of loops instead of isotopy of knots; we will explain examples that illustrate the existence of a closed $3$-manifold $M$ and a compact submanifolds $Y$ such that every loop in $M$ can be homotoped to lie within $Y$ while $Y$ admit no Fox reimbedding. The discussion of these examples leads us to a characterization of manifolds that are counterexamples to Waldhausen's question on the rank and the genus of $3$-manifolds.

\subsection*{Outline}

We review basic notions of $3$-manifolds in \S\ref{Preliminaries} and collect some facts about irreducible knots and simple knots in \S\ref{Irreducible Knots and Simple Knots}. Next, we study the genera of Heegaard splittings obtained by a process known as \emph{amalgamation} in \S\ref{Amalgamated Heegaard Genus}; the notion of \emph{amalgamated Heegaard genus} introduced in this section plays an essential role in the proof of Theorem~\ref{main theorem}. The main body of the proof is given in \S\ref{Characteristic Knots} as propositions involving the \emph{characteristic knots}; in \S\ref{Main Theorem and Examples}, we complete the proof and elaborate on its consequences mentioned in this introduction.

\subsection*{Acknowledgement}

The author would like to thank Joel Hass for providing insightful comments. The author would also like to thank Yoav Rieck for helpful discussions and Yuya Koda for carefully reading the earlier exposition of this work.

\section{Preliminaries} \label{Preliminaries}

\subsection{Conventions and Notations}

Throughout this article, we work with the piecewise-linear category, and all embeddings are assumed to be locally flat. 

A connected component of a manifold (or, more generally, a complex) $M$ will simply be called a \emph{component}, and we write $|M|$ for the number of the components.
For a submanifold (or a subcomplex) $X$ of a manifold (or a complex) $M$, we write $N(X,M)$ and $\overline{N}(X,M)$ respectively for an open regular neighborhood and a closed regular neighborhood of $X$ with respect to the topology of $M$, and we write $E(X,M):=M-N(X,M)$ for the \emph{exterior} of $X$ in $M$. When the ambient space $M$ is clear from the context, we may simply write $N(X)$, $\overline{N}(X)$, and $E(X)$ respectively for $N(X,M)$, $\overline{N}(X,M)$, and $E(X,M)$.

Any $2$-manifold will be called a \emph{surface}; they may or may not be connected.
A simple closed curve on a closed surface $F$ is said to be \emph{essential} if it does not bound a disk on $F$, and it is said to be \emph{trivial} otherwise.
If $F$ is a closed connected orientable surface, the \emph{genus} of $F$ will be denoted by $g(F)$ as usual. If $F$ is a closed disconnected orientable surface, we write $g(F)$ and $g_{\max}(F)$ for the \emph{total genus} and the \emph{maximal component-wise genus} respectively; namely, if $F$ consists of components $F_i$, $i \in \mathcal{I}$, then we set $g(F):=\sum_{i \in \mathcal{I}} g(F_i)$ and $g_{\max}(F):=\max_{i \in \mathcal{I}} g(F_i)$.

Every $3$-manifold will be orientable throughout this article.
A disjoint union of circles embedded in a $3$-manifold $M$ will be called a \emph{link} in $M$, and a one-component link is called a \emph{knot} in $M$. The \emph{exterior} of a link $L$ in $M$ is denoted by $E(L)$ or $E(L,M)$ as usual and in accords with our notation introduced above.
Whenever we speak of surfaces in $3$-manifolds, they are assumed to be embedded unless otherwise stated.
If a surface $F$ \emph{separates} $M$, then $F$ gives rise to a decomposition of $M$ into two compact submanifolds $Y_1$ and $Y_2$ such that $M=Y_1 \cup Y_2$ and $Y_1 \cap Y_2=F$; for brevity, such a decomposition will be denoted by $M=Y_1 \cup_F Y_2$. 
An orientable surface $F$ in a $3$-manifold $M$ is said to be \emph{compressible} if either (i) there exists a component of $F$ that bounds a closed $3$-ball in $M$, or (ii) there exists an embedded disk $D$ in $M$, called a \emph{compression disk} for $F$, such that $\partial D$ is an essential simple closed curve on $F$ and $D \cap F=\partial D$; $F$ is said to be \emph{incompressible} otherwise. A $3$-manifold $M$ is said to be \emph{irreducible} if every $2$-sphere in $M$ is compressible, and it is said to be \emph{atoroidal} if every incompressible $2$-torus in $M$ is $\partial$-parallel.

\subsection{Handlebodies and Compression bodies}

A \emph{handlebody} is a connected compact 3-manifold which is homeomorphic to the closed regular neighborhood of a connected graph embedded in an orientable 3-manifold, e.g. in $S^3$.
A \emph{compression body} is a generalization of a handlebody. In this paper, we use the definition of a compression body that appears in \cite{Hayashi--Shimokawa:ThinPosition} and \cite{Saito--Scharlemann--Schultens:GeneralizedHeeg}. Namely, a \emph{compression body} $V$ is a connected $3$-manifold obtained from a closed $3$-ball $B$ or $F \times I$, where $F$ is a (possibly disconnected) closed orientable surface and $I=[0,1]$, by attaching some number (possibly 0) of $1$-handles on $\partial B$ or $F \times \{1\}$ respectively.

The boundary $\partial V$ of a compression body $V$ is subdivided into two subsets, $\partial_-V$ and $\partial_+V$, as follows. Let (i) $\partial_-V:=\nil$ if $V$ is constructed from a closed $3$-ball $B$, and (ii) $\partial_-V:=F \times \{0\}$ if $V$ is constructed from $F \times I$; then, set $\partial_+V:=\partial V-\partial_-V$. By construction, $\partial_+V$ is always non-empty; moreover, since we require $V$ to be connected, it also follows from the construction that $\partial_+V$ is always connected. When $V$ is constructed from a closed $3$-ball $B$, the compression body $V$ is actually a handlebody and $\partial_+V=\partial V$; indeed, every handlebody can be constructed in this manner. Let us remark that, in the definition above, $\partial_+V$ is allowed to be a sphere and $\partial_-V$ is allowed to have a sphere component; this is in contrast with an alternative definition of a compression body that has appeared in the literature, where $\partial V$ must not have a sphere components.

A \emph{spine} of a compression body $V$ is a subcomplex $\mathbf{\Sigma}_V=\bfA \cup \partial_-V$, where $\bfA$ is an embedded graph, such that $E(\mathbf{\Sigma}_V,V) \cong \partial_+V \times I$. A spine $\mathbf{\Sigma}_V$ is said to be \emph{minimal} if (i) $V$ is a handlebody and $\mathbf{\Sigma}_V$ is a bouquet of circles, or (ii) $V$ is not a handlebody and $\mathbf{\Sigma}_V$ is a collection of properly embedded arcs in $V$.
A \emph{disk system} $\bfD \subset V$ of a compression body $V$ is a pairwise disjoint and pairwise non-parallel (possibly empty) collection of compression disks for $\partial_+V$ such that $E(\bfD,V)$ is homeomorphic to a union of $\partial_-V \times I$ and some closed $3$-balls. A disk system $\bfD$ is said to be \emph{minimal} if (i) $V$ is a handlebody and $E(\bfD,V) \cong B^3$, or (ii) $V$ is not a handlebody and $E(\bfD,V) \cong \partial_-V \times I$.

By the Poincar\'{e}-Lefschez duality of handle decompositions, we have a one-to-one correspondence between spines and disk systems of a compression body $V$; in particular, minimal spines and minimal disk systems are related bijectively by this duality. Given a minimal disk system $\bfD$ of a compression body $V$, we can regard $\overline{N}(\bfD,V)$ as a collection of 1-handles with $\bfD$ as the cocore. If $V$ is a handlebody, we obtain a minimal spine by extending the core arcs of these 1-handles with the radial arcs in $E(\bfD,V) \cong B^3$. If $V$ is not a handlebody, we obtain a minimal spine by extending the core arcs with vertical arcs in $E(\bfD,V) \cong \partial_-V \times I$. In either case, the spine obtained from $\bfD$ as above is called the \emph{dual spine of $\bfD$}. Reversing the construction, we see that each minimal spine $\mathbf{\Sigma}_V$ gives rise to a minimal disk system, called the \emph{dual disk system of $\mathbf{\Sigma}_V$}.

\subsection{Heegaard Splittings} \label{Heegaard Splittings}

Let $M$ be a connected compact $3$-manifold, and let $\partial M=\partial_V M \sqcup \partial_W M$ be a bipartition of $\partial M$. A decomposition $M=V \cup_S W$ is a \emph{Heegaard splitting} of $(M, \partial_V M, \partial_W M)$ if $S$ decomposes $M$ into compression bodies $V$ and $W$ such that $S=V \cap W = \partial_+V=\partial_+W$, $\partial_-V=\partial_V M$ and $\partial_-W=\partial_W M$; the surface $S$ is the \emph{Heegaard surface} of $(M, \partial_V M, \partial_W M)$ for this splitting. When there is no need to specify the bipartition of $\partial M$, we may simply say that $M=V \cup_S W$ is a Heegaard splitting of $M$, and that $S$ is a Heegaard surface for $M$.
%
A knot $K$ in $M$ is said to be in a \emph{bridge position with respect to a Heegaard splitting $M=V \cup_S W$} if $K$ meets each compression body in $\partial$-parallel arcs. 

The genus of a Heegaard splitting $M=V \cup_S W$ is defined to be the genus of the Heegaard surface $S$. The \emph{Heegaard genus of $M$}, or simply the \emph{genus of $M$}, is defined to be the minimum of genera of Heegaard splittings of $M$, and it is denoted by $g(M)$. We can also define a more restricted notion of Heegaard genus by specifying the bipartition $\partial M=\partial_V M \sqcup \partial_W M$. The \emph{genus of $(M, \partial_V M, \partial_W M)$} is defined to be the minimum of genera of Heegaard splittin gs of $(M, \partial_V M, \partial_W M)$, and it is denoted by $g(M, \partial_V M, \partial_W M)$.

An extreme choice of a bipartition of $\partial M$ is given by $\partial_V M=\partial M$ and $\partial_W M=\nil$. Following Kobayashi and Nishi \cite{Kobayashi--Nishi:Bing}, we call the Heegaard splitting of $(M,\partial M,\nil)$ a \emph{tunnel-type} Heegaard splitting of $M$. The \emph{tunnel-type genus} of $M$ is defined to be $g^t(M):=g(M,\partial M, \nil)$, i.e. the minimum of genera of tunnel-type Heegaard splittings of $M$. When $|\partial M| \leq 1$, there is a unique bipartition $\partial M=\partial M \sqcup \nil$; hence, every Heegaard splitting of $M$ is of tunnel-type, and we have $g^t(M)=g(M)$.

For a $3$-manifold $M$ that is not connected, a Heegaard splitting of $M$ is defined to be a union of Heegaard splittings of components of $M$. In other words, we have a Heegaard splitting $M_i=V_i \cup_{S_i} W_i$ for each component $M_i$, and a Heegaard splitting of $M$ is the decomposition $M=V \cup_S W$ where $V=\bigcup_i V_i$, $W=\bigcup_i W_i$, and $F=\bigcup_i S_i$. The genus of the splitting is defined to be $g(S)$, i.e. the total genus of $F$. The tunnel-type splittings and tunnel-type genus are defined analogously for disconnected $M$ as well by requiring $\partial_-V=\partial M$ and $\partial_-W=\nil$.

\subsection{Tunnel Number} \label{Tunnel Number}

Given a connected compact $3$-manifold $M$ with $\partial M \neq \nil$, a collection $\bfA$ of properly embedded arcs in $M$ is called a \emph{tunnel system of $M$} if $E(\bfA,M)$ is a handlebody. The \emph{tunnel number of $M$}, denoted by $t(M)$, is the minimum of $|\bfA|$ over all tunnel systems $\bfA$ of $M$. For a knot $K$ in a closed manifold, the tunnel number $t\big(E(K)\big)$ is precisely the well-studied \emph{tunnel number of $K$}, denoted by $t(K)$. The tunnel number $t(M)$ is a measure of the complexity of $M$, and it is a natural generalization of the tunnel number $t(K)$.

Now, given a tunnel system $\bfA$ of $M$ as above, $V:=\overline{N}(\bfA \cup \partial M)$ is a compression body with a minimal spine $\mathbf{\Sigma}_V=\bfA \cup \partial M$, and $W:=E(\mathbf{\Sigma}_V ,M) \cong E(\bfA, M)$ is a handlebody; hence, writing $S=\partial_+ V=\partial_+W$, we have a tunnel-type Heegaard splitting $M=V \cup_S W$. Conversely, for any tunnel-type Heegaard splitting $M=V \cup_S W$, any minimal spine $\mathbf{\Sigma}_V=\bfA \cup \partial M$ gives rise to a tunnel system $\bfA$. One may observe that, although there are an infinite number of choices for a tunnel system $\bfA$ corresponding to a given splitting $M=V \cup_S W$, the number of arcs in $\bfA$ is always given by $|\bfA|=g(S)-g(\partial M)+|\partial M|-1$, which depends only on $g(S)$ and not on a particular choice of a tunnel-system $\bfA$ for the splitting. Passing to the minimum over all tunnel systems on the left-hand side and over all tunnel-type splittings on the right-hand side, we have $t(M)=g^t(M)-g(\partial M)+|\partial M|-1$. In particular, for a knot $K$ in a closed manifold, the above formula yields $t(K)=t\big(E(K)\big)=g^t\big(E(K)\big)-1=g\big(E(K)\big)-1$.

From the above formula, it is clear that the tunnel number $t(M)$ and the tunnel-type genus $g^t(M)$ encode essentially the same information about the complexity of $M$, which plays a crucial role in our work. The tunnel-type genus often turns out to be a more convenient way to express this complexity for our purpose.

\subsection{Haken--Casson--Gordon Lemma} 

One of the most fundamental tools in the modern studies of Heegaard splittings is the Haken--Casson--Gordon Lemma \cite{Haken:SomeResults} \cite{Casson--Gordon:ReducingHeeg} and its consequences. Hayashi and Shimokawa \cite[Theorem 1.3]{Hayashi--Shimokawa:ThinPosition} gave a generalization of the lemma, which allows the presence of a link.

\begin{thm}[{\cite{Hayashi--Shimokawa:ThinPosition}}] \label{HCGHS}
Let $M$ be a connected $3$-manifold with non-empty boundary, and let $L$ be a (possibly empty) link in $M$. Let $M=V \cup_S W$ be a Heegaard splitting of $M$, and suppose that, if $L \neq \nil$, $L$ is in a bridge position with respect to $S$. If $D \subset E(L)$ is a compression disk for $\partial_-W$, then there exists a compression disk $D' \subset E(L)$ such that
\begin{enumerate}
\item $S \cap D'$ is an essential simple closed curve on $S$;
\item $D'$ is obtained from $D$ by $2$-surgeries and isotopy in $E(L)$;
\item if $E(L)$ is irreducible, then $D'$ is isotopic to $D$ in $E(L)$; and
\item there exist minimal disk systems of $V$ and $W$ disjoint from $D'$.
\end{enumerate}
\end{thm}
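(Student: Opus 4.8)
The plan is to adapt the classical innermost-disk and outermost-arc surgery argument for Heegaard splittings to the setting of the link exterior $E(L)$, using the bridge position of $L$ to keep every disk, annulus, and isotopy in play disjoint from $L$. First I would isotope $D$ to meet $S$ transversely; since $\partial D$ lies on $\partial_- W \subset \partial M$, which is disjoint from $S$, and $D$ is properly embedded in $E(L)$, the intersection $D \cap S$ is a finite collection of pairwise disjoint simple closed curves in the interior of $D$. It is also nonempty: a compression body deformation retracts onto (a copy of) $\partial_- W$ together with the cores of its $1$-handles, so $\pi_1(\partial_- W) \to \pi_1(W)$ is injective and no disk with essential boundary on $\partial_- W$ can lie in $W$; as $\partial D$ cannot lie in $V$ either, $D$ is forced to cross $S$. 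Now I pass to the family of all compression disks in $E(L)$ obtained from $D$ by a (possibly empty) finite sequence of $2$-surgeries and isotopies in $E(L)$, with the proviso that only isotopies are used when $E(L)$ is irreducible --- in that case the family consists precisely of disks isotopic to $D$, since the sphere discarded in each admissible $2$-surgery bounds a ball --- and I fix a member $D'$ of this family minimizing $|D' \cap S|$, relabelling it $D$.

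The next step is to arrange that every curve of $D \cap S$ is essential on $S$. If some curve is trivial on $S$, pick one, say $c$, that is innermost on $S$; it bounds a disk $S_0 \subset S$ with $S_0 \cap D = c$, and $S_0$ is disjoint from $L$ because $L$ is in bridge position. A $2$-surgery of $D$ along $S_0$ --- or, when $E(L)$ is irreducible, an isotopy of $D$ across the ball bounded by the resulting $2$-sphere --- produces an admissible disk with strictly fewer intersection curves, contradicting minimality; hence every curve of $D \cap S$ is essential on $S$. It then remains to bring $|D \cap S|$ down to a single curve. Here I would take $D_0$ to be an innermost disk of $D \cap S$ on $D$, say with $D_0 \subset V$, so that $D_0$ is a compression disk for $S$ with $\partial D_0$ essential on $S = \partial_+ V$, and analyze its intersection with a minimal disk system of $V$ by outermost-arc exchanges; these feed a further sequence of $2$-surgeries (or isotopies, when $E(L)$ is irreducible) on $D$ that either lowers $|D \cap S|$, again contradicting minimality, or exhibits the required compression disk meeting $S$ in exactly one essential simple closed curve. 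This establishes (1) and (2), and --- since only isotopies were needed when $E(L)$ is irreducible --- (3).

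For (4), write $c$ for the surviving essential curve $D \cap S$; then $c$ cuts $D$ into an annulus $P_W \subset W$ from $\partial_- W$ to $c$ and a disk $P_V \subset V$ bounded by $c$ (the degenerate case in which $D$ itself is a compression disk for $S$ is similar). Cutting $V$ along $P_V$ gives a compression body whose minimal disk system, reinserted into $V$, is a minimal disk system of $V$ disjoint from $P_V$ and hence from $D$; cutting $W$ along $P_W$ and reasoning likewise --- or directly isotoping the cocores of a minimal disk system of $W$ off the annulus $P_W$ by innermost-circle and outermost-arc moves --- yields a minimal disk system of $W$ disjoint from $D$. Throughout, the link is handled by bridge position: the arcs of $L$ in $V$ and in $W$ are $\partial$-parallel, so they can be pushed clear of whichever innermost subdisk, annulus, or disk system is being manipulated, keeping every move inside $E(L)$. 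The main obstacle --- and the technical heart of the argument --- is exactly the step that whittles $D \cap S$ down to a single essential curve by surgeries respecting the link, together with the simultaneous positioning of the final disk and the two minimal disk systems required for (4).
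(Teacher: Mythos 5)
The paper does not prove this statement; it cites it from Hayashi--Shimokawa, with the accompanying remark explaining that (3) follows from their proof of (1)--(2), and that (4) is obtained by adapting the argument of Casson--Gordon. Your proposal, by contrast, attempts a self-contained proof via the classical Haken--Casson--Gordon innermost-disk/outermost-arc scheme, which is indeed the correct template, but it has a concrete gap exactly where the Hayashi--Shimokawa extension is nontrivial: you assert that an innermost trivial curve $c \subset D \cap S$ bounds a disk $S_0 \subset S$ that is automatically disjoint from $L$ ``because $L$ is in bridge position.'' This does not follow. Bridge position says that $L$ meets $V$ and $W$ in $\partial$-parallel arcs; it says nothing about where the finitely many points of $L \cap S$ fall relative to an innermost disk $S_0$ carved out by $D \cap S$. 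If $S_0$ contains puncture points of $L$, the $2$-surgery along $S_0$ does not yield a disk in $E(L)$, and the minimality argument collapses. Controlling exactly this interaction---ensuring that the disks and annuli used for surgery avoid the bridge arcs, typically via a thin-position or Morse-theoretic argument on $L$---is the technical heart of the Hayashi--Shimokawa result, and it cannot be dispatched by the one-line appeal to bridge position.

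Two further steps are under-specified. The reduction from ``all curves of $D\cap S$ essential'' to a \emph{single} essential curve is described only as ``analyze its intersection with a minimal disk system of $V$ by outermost-arc exchanges; these feed a further sequence of $2$-surgeries''---but you do not say what those surgeries are or why they preserve the property of being a compression disk for $\partial_-W$ in $E(L)$. And for (4), cutting $V$ along $P_V$ and reinserting a minimal disk system of the cut-open compression body does not obviously produce a \emph{minimal} disk system of $V$ disjoint from $D$; the Casson--Gordon argument that the paper invokes is more careful about the interaction between $D'$ and the chosen disk systems, again in the presence of the bridge arcs of $L$. So while your outline has the right shape, the claims that make it work in the link-exterior setting are asserted rather than established, and at least one of them is false as stated.
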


\begin{rem}
The conclusions (3) and (4) were not explicitly stated in \cite{Hayashi--Shimokawa:ThinPosition}. We note that (3) readily follows from the proof of (1) and (2) in \cite{Hayashi--Shimokawa:ThinPosition}. Also, the argument in \cite{Casson--Gordon:ReducingHeeg-Preprint}, where a statement analogous to (4) was given, easily carries over to our setting with minor modification and yields the conclusion (4).
\end{rem}

Following Kobayashi and Nishi \cite[Proposition 6.2]{Kobayashi--Nishi:Bing}, we can now describe how the tunnel-type genus of a compact $3$-manifold $M$ changes after cutting $M$ along a compression disk for the boundary.

\begin{prop}[c.f. \cite{Kobayashi--Nishi:Bing}] \label{compression}
Let $M$ be a $3$-manifold with non-empty boundary, and let $L$ be a (possibly empty) link in $M$ such that $E(L)$ is irreducible. Suppose $D \subset E(L)$ is a compression disk for $\partial M$, and let $M_-:=E(D,M)$ be the manifold obtained by cutting $M$ along $D$.
Then,
\[
g^t(M_-)=\begin{cases}
g^t(M) & \text{if $|M_-|=|M|+1$, or} \\
g^t(M)-1 & \text{if $|M_-|=|M|$.}
\end{cases}
\]
\end{prop}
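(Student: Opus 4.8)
The plan is to start from a minimal tunnel-type Heegaard splitting of $M$, make it compatible with the given compression disk $D$ via Theorem~\ref{HCGHS}, and then read off a tunnel-type splitting of $M_-$ of the appropriate genus; for the reverse inequality we run essentially the same argument backwards, building a tunnel-type splitting of $M$ by re-gluing along $D$.

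First I would establish $g^t(M_-) \leq g^t(M)$ or $g^t(M_-) \leq g^t(M)-1$ as appropriate. Take a tunnel-type Heegaard splitting $M=V \cup_S W$ realizing $g^t(M)=g(S)$, so $\partial_-V=\partial M$ and $\partial_-W=\nil$, i.e. $W$ is a handlebody. Since $D$ is a compression disk for $\partial M=\partial_-V$, after isotopy we may apply the analogue of Theorem~\ref{HCGHS} with the roles of $V$ and $W$ interchanged (here $L=\nil$, and $E(L)=M$ is irreducible because $E(L)$ is irreducible and $L=\nil$; more generally one keeps $L$ around, in bridge position, and uses that $E(L)$ is irreducible, invoking conclusion~(3) to keep $D'$ isotopic to $D$) to arrange that $D$ meets $S$ in a single essential simple closed curve and that there are minimal disk systems $\bfD_V$, $\bfD_W$ disjoint from $D$. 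Cutting the whole picture along $D$: the surface $S$ is cut into a surface $S_-$ with $g(S_-)=g(S)-1$ if $\partial D$ is non-separating on $S$ and $g(S_-)=g(S)$ if $\partial D$ is separating on $S$, while $\partial D$ non-separating on $S$ corresponds to $D$ not separating $M$ (so $|M_-|=|M|$) and $\partial D$ separating on $S$ to $D$ separating $M$ (so $|M_-|=|M|+1$) — this dictionary is the technical heart of the genus bookkeeping. One then checks that $V$ cut along $D$ is again a union of compression bodies with $\partial_-$ equal to $\partial M_-$ (cutting a compression body along a disk meeting its minimal spine/disk system nicely yields compression bodies), and likewise $W$ cut along $D$ is a union of handlebodies; thus $M_- = V_- \cup_{S_-} W_-$ is a tunnel-type Heegaard splitting of $M_-$, giving $g^t(M_-) \leq g(S_-)$, which is $g^t(M)$ in the separating case and $g^t(M)-1$ in the non-separating case.

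For the reverse inequality I would take a minimal tunnel-type Heegaard splitting $M_- = V_- \cup_{S_-} W_-$ of $M_-$ and reattach the two copies of $D$ sitting in $\partial M_-$. The two copies of $D$ each lie in a boundary component of $M_-$, hence in $\partial_- V_-$; I would isotope them to be vertical annuli-with-disk structure in $V_-$ (using that $V_-$ is $\partial_- V_- \times I$ union 1-handles, a disk in $\partial_- V_- \times \{0\}$ can be pushed to a vertical disk), so that re-gluing the two copies identifies them into a single properly embedded disk $D$ in $M$ with $D \cap S_- $ a single curve. Re-gluing turns $V_-$ back into a compression body $V$ with $\partial_- V = \partial M$ and turns $W_-$ back into a handlebody $W$ (re-gluing along a disk in the negative boundary of a compression body keeps it a compression body; gluing handlebodies along a boundary disk gives a handlebody), and the Heegaard surface becomes $S$ with $g(S)=g(S_-)+1$ if the two copies of $D$ lay in the same component of $M_-$ (the $|M_-|=|M|$ case) and $g(S)=g(S_-)$ if they lay in different components (the $|M_-|=|M|+1$ case). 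Hence $g^t(M) \leq g(S)$, which combined with the first half gives equality in both cases.

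The main obstacle I anticipate is the reverse direction, and specifically the claim that the Heegaard surface $S_-$ of an \emph{arbitrary} minimal splitting of $M_-$ can be made to meet each copy of $D$ in a single essential curve before re-gluing: a generic Heegaard surface need not meet a given boundary-parallel-type disk nicely, and one cannot invoke Theorem~\ref{HCGHS} directly because $D$ sits in $\partial M_-$ rather than being a compression disk for a $\partial_-$ component detected by a splitting of $M_-$. The fix is to observe that each copy of $D$ is parallel into $\partial_- V_-$, so after an ambient isotopy supported near that boundary collar we may assume $S_-$ misses a neighborhood of each copy of $D$ entirely and the copies are vertical; then re-gluing is transparent and no surgery is needed. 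A secondary technical point is the bookkeeping when $L \neq \nil$: one must keep $L$ in bridge position throughout, which is preserved under cutting and re-gluing along $D$ since $D \subset E(L)$, so $L$ stays disjoint from $D$ and the bridge arcs are unaffected; I would remark on this but not belabor it, citing \cite{Kobayashi--Nishi:Bing} for the analogous argument.
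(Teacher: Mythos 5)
Your first half (establishing $g^t(M_-) \le g^t(M)$ or $g^t(M)-1$) follows the same route as the paper: isotope $L$ to bridge position, apply Theorem~\ref{HCGHS} to make $D$ meet the Heegaard surface $S$ in a single essential curve, and cut. This is exactly what the paper does before deferring the bookkeeping to \cite{Kobayashi--Nishi:Bing}. Your ``dictionary'' between $\partial D$ separating $S$ and $D$ separating $M$ is asserted rather than proved, but the approach is sound.

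The second half has a genuine gap. The claim that re-gluing the two copies of $D \subset \partial M_- = \partial_-V_-$ turns $V_-$ back into a compression body is false, and the claim that the genus of the Heegaard surface jumps by one in the non-separating case contradicts your own setup. Since $\partial_-W_- = \nil$, the handlebody $W_-$ lies in the interior of $M_-$ and is untouched by the identification $D_1\sim D_2$, and so is $S_-$; yet you assert $g(S)=g(S_-)+1$. To see the compression-body claim fail concretely, let $M$ be a genus-$2$ handlebody and $D$ a non-separating meridian disk, so $M_-$ is a solid torus with $V_-\cong T^2\times I$, $W_-$ a solid torus, $S_-\cong T^2$. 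Identifying the two disks $D_1,D_2\subset T^2\times\{0\}$ yields a manifold $V$ with $\partial V\cong \Sigma_2\sqcup T^2$; there is no way to label these as $\partial_\pm V$ so that $V$ is a compression body ($g(\partial_+V)\ge g(\partial_-V)$ must hold), and $W_-$ is still a solid torus attached along $T^2=S_-$, so $M=V\cup_{S_-}W_-$ is not a Heegaard splitting. (Here $g^t(M_-)=1$ and $g^t(M)=2$, and your construction would wrongly give a genus-$1$ splitting of $M$.)

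The correct reverse inequality is much simpler and does not pass through compression bodies at all: any tunnel system $\bfA$ for $M_-$, isotoped off the two disk copies, becomes a tunnel system for $M$, since $E(\bfA,M)$ is $E(\bfA,M_-)$ with a single $1$-handle (a thickened copy of $D$) attached along two boundary disks, hence a (possibly newly connected) handlebody. This gives $t(M)\le t(M_-)$, and running the formula $g^t(\cdot)=t(\cdot)+g(\partial\cdot)-|\partial\cdot|+|\cdot|$ in each case of $\partial D$ separating/non-separating on $\partial M$ yields precisely $g^t(M)\le g^t(M_-)$ when $|M_-|=|M|+1$ and $g^t(M)\le g^t(M_-)+1$ when $|M_-|=|M|$, matching the inequalities you needed.
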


\begin{proof}
Since the compression only affects the component of $M$ that contains $D$, we may assume $M$ is connected as well. Consider a minimal genus tunnel-type Heegaard splitting $M=V \cup_S W$ where $V$ is a handlebody. Isotoping $L$ if necessary, we may assume that $L$ is in a bridge position with respect to $S$. By Theorem~\ref{HCGHS}, we may isotope $D$ in $E(L)$ so that it meets $S$ in a simple closed essential curve on $S \cap E(L)$. The rest of the proof is identical to that of \cite{Kobayashi--Nishi:Bing}, and we omit it.
\end{proof}

\section{Irreducible Knots and Simple Knots} \label{Irreducible Knots and Simple Knots}

A knot $K$ in a closed connected $3$-manifold $M$ is said to be \emph{irreducible} if the exterior $E(K)$ is irreducible, and it is said to be \emph{simple} if $E(K)$ is irreducible and atoroidal. Simple knots comprise an important class of knots in the study of $3$-manifolds, since Thurston's hyperbolization theorem implies that the complement of a simple knot admits a unique cpmplete hyperbolic metric. In this section, we record some facts and observations about these knots and their exterior.

\subsection{Existence}

Irreducible knots and simple knots are known to exist in every closed connected $3$-manifold $M$. 
Generalizing Bing's ideas in \cite{Bing:S3}, Myers proved the existence of simple knots \cite[Theorem 6.1]{Myers:SimpleKnots}; the construction starts with a special handle decomposition of $M$ and produces a simple knot by connecting sufficiently complicated tangles in $0$-handles by arcs in $1$-handles that are parallel to the core arcs. Refining Myers' work, Hass and Thompson showed that this construction could produce an infinite number of such knots in $M$ that are pairwise distinct \cite[Proposition 3]{Hass--Thompson:Bing}.

\begin{thm}[{\cite{Myers:SimpleKnots}}, {\cite{Hass--Thompson:Bing}}] \label{existence-simple}
Let $M$ be a closed connected $3$-manifold. Then, there exist an infinite number of distinct simple knots in $M$.
\end{thm}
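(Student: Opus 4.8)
The plan is to reduce the statement to Myers' construction, which is cited but whose conclusion we want to re-derive and strengthen to an infinite family, and to do so by first establishing existence of a \emph{single} simple knot and then bootstrapping to infinitely many via connected-sum or tangle-replacement tricks inside a fixed ball. First I would fix a triangulation of $M$ and extract a handle decomposition with a single $0$-handle (or, equivalently, realize $M$ as the union of a $3$-ball $B_0$ and finitely many $1$-handles $h_1,\dots,h_n$ attached along $\partial B_0$, together with the remaining $2$- and $3$-handles which we may absorb into the picture since they only impose relations). The candidate knot $K$ will run through $B_0$ as a sufficiently complicated tangle $T$ and through each $h_i$ as an arc parallel to the core; the free parameters are the choice of $T$ and the pattern of how the strands entering and leaving each handle are interconnected inside $B_0$.

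The key steps, in order, are: (1) set up the handle decomposition and the combinatorial skeleton of $K$ so that $E(K)$ is built from $E(T,B_0)$ by gluing along annuli coming from $E(h_i)$; (2) show that if $T$ is chosen so that $E(T,B_0)$ is irreducible, atoroidal, and has incompressible boundary, and moreover the boundary pattern meets every essential surface nontrivially, then these properties are inherited by $E(K)$ — this is the heart of the argument and is where one invokes the standard machinery: an essential sphere, essential torus, or compressing disk in $E(K)$ can be isotoped to meet the gluing annuli minimally, and an innermost/outermost argument pushes the pieces back into $E(T,B_0)$, contradicting its assumed properties; (3) exhibit a concrete tangle $T$ (Myers' tangles, or a suitable iterated augmentation of a single complicated tangle) with the required properties — here one appeals to hyperbolic Dehn surgery or to Myers' explicit analysis of ``simple'' tangles; and (4) upgrade to infinitely many distinct knots: replace $T$ by $T_k$ obtained by inserting $k$ extra full twists or connect-summing $k$ copies of a fixed knotted arc in a small ball disjoint from everything, show each $T_k$ still yields a simple $K_k$ by the same argument, and distinguish the $K_k$ using an invariant such as the hyperbolic volume of $E(K_k)$ (which is finite and strictly monotone under the augmentation) or the genus/tunnel number of $E(K_k)$.

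The main obstacle I expect is step (2) combined with step (3): one must choose the tangle $T$ and the interconnection pattern carefully enough that \emph{no} essential surface in $E(K)$ can survive, including surfaces that wander through several $1$-handles, and this requires the boundary pattern of $E(T,B_0)$ to be ``filling'' in a precise sense (every essential annulus or torus must hit the curves where the $1$-handle annuli are attached). Controlling essential annuli is particularly delicate because an essential annulus in $E(K)$ with boundary on $\partial N(K)$ could a priori have both feet on the same $1$-handle and be invisible to a naive innermost-disk argument; ruling this out is exactly the atoroidality part and is why one wants the tangles to be not merely knotted but ``anannular'' in Myers' terminology. The remaining steps (1) and (4) are comparatively routine: step (1) is bookkeeping about how exteriors decompose under handle attachment, and step (4) follows once any single reliable invariant separating the $K_k$ is in hand, with hyperbolic volume being the cleanest choice since Thurston's theorem (invoked in the paragraph preceding the statement) already gives each $E(K_k)$ a canonical hyperbolic structure.
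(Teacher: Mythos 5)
The paper itself offers no proof of this statement — it is a cited result (Myers' Theorem~6.1 for the existence of one simple knot, Hass and Thompson's Proposition~3 for the upgrade to an infinite family), and the paragraph preceding the theorem gives only a one-sentence summary of that construction: a special handle decomposition, sufficiently complicated tangles in the $0$-handles, core-parallel arcs in the $1$-handles. Your proposal reconstructs exactly that argument, including the key technical points (the tangle must be ``filling'' with respect to essential surfaces; essential annuli with both feet on a single $1$-handle are the delicate case), so at the level of strategy you agree with the sources the paper relies on.

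There is, however, a genuine misstep in your step~(4). The connect-sum mechanism — inserting a knotted arc into one strand of $T$ inside a small ball $B'$ — does not preserve simplicity: in the resulting $E(K_k)$, the twice-punctured sphere $\partial B' \cap E(K_k)$, capped off along $\partial N(K_k)$, becomes a swallow-follow torus bounding a nontrivial knot exterior on one side; this torus is incompressible and not $\partial$-parallel, so $E(K_k)$ is no longer atoroidal and $K_k$ is not simple. Your other mechanism, inserting full twists (equivalently, performing Dehn surgery on an auxiliary unknot linking two strands), is the viable one and is closer to what the cited authors actually do, but it still requires an argument that the twisted tangle remains excellent and that the $K_k$ are pairwise distinct. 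A further caution on your chosen invariant: irreducibility and atoroidality alone do not force $E(K_k)$ to be hyperbolic — torus-knot exteriors are irreducible, atoroidal, and Seifert fibered — so the hyperbolic-volume separation only works if you also build anannularity into the tangle from the start, which is precisely why Myers works with ``excellent'' tangles. With anannularity in hand, either the volume argument or a tunnel-number argument (the route Kobayashi and Nishi take in their refinement stated here as Theorem~\ref{existence-char}) will separate the $K_k$.
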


\begin{rem}
Using the theory of Heegaard splittings as the main tool, Rieck gave an alternative proof of the existence of irreducible knots in \cite{Rieck:Bing}.
\end{rem}

\subsection{Spherical and Toric Boundary}
If $K$ is an irreducible knot or a simple knot which is contained in a connected submanifold $Y$, some components of of $\overline{M-Y}$ can be described concretely. Specifically, we are concerned with a submanifold $Y$ such that $\partial Y$ contains an $S^2$-component or $T^2$-component.

First, let $K$ be an irreducible knot in a closed connected $3$-manifold $M$. The following lemma, involving an $S^2$-component of $\partial Y$, is immediate from the definition.

\begin{lem} \label{sphere boundary}
Let $M$ be a closed connected $3$-manifold and $Y$ be a connected $3$-submanifold of $M$. If there exists a knot $K \subset Y$ which is irreducible in $M$, then each $S^2$-component of $\partial Y$ bounds a closed $3$-ball outside $Y$.
\end{lem}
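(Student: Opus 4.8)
The plan is to use irreducibility of $E(K)$ directly. First I would fix an arbitrary $S^2$-component $\Sigma$ of $\partial Y$. After a small isotopy of $K$ inside $Y$—which does not change whether $K$ is irreducible, since $E(K)$ depends only on the isotopy class of $K$—we may assume $K \subset \mathrm{int}(Y)$; then $K \cap \Sigma = \nil$, and choosing $N(K)$ small enough we have $\Sigma \subset E(K)$. By hypothesis $E(K)$ is irreducible, so the locally flat $2$-sphere $\Sigma$ is compressible in $E(K)$. Since a $2$-sphere carries no essential simple closed curve, compressibility cannot be witnessed by clause (ii) of the definition, so it must be witnessed by clause (i): $\Sigma$ bounds a closed $3$-ball $B$ in $E(K) \subset M$.

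Next I would check that this ball lies outside $Y$. Because $\Sigma = \partial B$ is connected and $M$ is connected, $\Sigma$ separates $M$ into $B$ and $\overline{M-B}$, both connected. Since $B \subset E(K)$ we have $K \cap B = \nil$, so $K \subset \overline{M-B}$. On the other hand, $\Sigma$ is a boundary component of the connected manifold $Y$, so a collar of $\Sigma$ in $Y$ exhibits the connected set $\mathrm{int}(Y)$ as lying entirely on one side of $\Sigma$; as $K \subset \mathrm{int}(Y)$ and $K \cap B = \nil$, that side must be $\overline{M-B}$, and hence $\mathrm{int}(Y) \cap B = \nil$. Finally, any boundary component of $Y$ other than $\Sigma$ lies in $\overline{\mathrm{int}(Y)} \subset \overline{M-B}$ and is disjoint from $\partial B = \Sigma$, hence disjoint from $B$; therefore $B \cap Y = \Sigma = \partial B$. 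This says precisely that $\Sigma$ bounds a closed $3$-ball disjoint from $\mathrm{int}(Y)$, i.e. a closed $3$-ball outside $Y$.

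There is no genuine obstacle here; the only points that need a word of care are (a) the reduction to $K \subset \mathrm{int}(Y)$ together with the remark that irreducibility of a knot is an isotopy invariant, and (b) the observation that $\mathrm{int}(Y)$ is connected and lies on a single side of the separating sphere $\Sigma$, which follows from the existence of a boundary collar for $\Sigma$ in $Y$. Consequently the written proof should occupy only a few lines, consistent with the assertion that the lemma is immediate from the definitions.
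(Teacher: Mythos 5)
Your proof is correct and follows essentially the same route as the paper's: pass to $E(K)$, invoke irreducibility to get a $3$-ball $B$ bounded by the $S^2$-component, then use $K \subset Y$ to place $B$ on the non-$Y$ side. The paper's own version is terser (two sentences) and leaves implicit both the isotopy pushing $K$ into $\mathrm{int}(Y)$ and the connectedness argument showing $B \cap Y = \Sigma$; you fill those in, but there is no difference in the underlying idea.
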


\begin{proof}
Let $S$ be an $S^2$-component of $\partial Y$. If a knot $K \subset Y$ is irreducible in $M$, $S \subset E(K)$ must bound a closed $3$-ball in $E(K)$. Such a ball must sit outside $Y$, since $K \subset Y$.
\end{proof}

Next, we consider $T^2$-components of $\partial Y$. If there exists a compression disk $D$ for $\partial Y$ such that $\partial D$ lies on a $T^2$-component of $\partial Y$, then compression along $D$ gives a sphere where we can utilize the irreducibility of $K$. Extending the arguments that have appeared in \cite[\S9]{Myers:SimpleKnots} and \cite{Hass--Thompson:Bing}, we record the following lemma.

\begin{lem} \label{torus boundary 0}
Let $M$ be a closed connected $3$-manifold and $Y$ be a connected $3$-submanifold of $M$. If there exists a knot $K \subset Y$ which is irreducible in $M$, and if there exists a compression disk $D \subset E(K)$ for $\partial Y$ such that $\partial D$ lies on some torus component $T \subset \partial Y$, then $T$ bounds a submanifold $Z$ outside of $Y$ such that $Z$ is homeomorphic to the exterior of some knot in $S^3$. Moreover, there exists a Fox reimbedding of $Y$ into $M$, restricting to the identity map on $K \subset Y$.
\end{lem}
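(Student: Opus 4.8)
The plan is to analyze the two pieces into which the torus $T$ cuts off $Y$ from $M$, using the irreducibility of $K$ to control the piece not containing $K$. First I would set $M = Y \cup_{\partial Y} \overline{M - Y}$ and focus on the torus component $T \subset \partial Y$. Since $T$ is separating in $M$ (being a component of the separating surface $\partial Y$), write $M = Y' \cup_T Z$, where $Y'$ is the component of $\overline{M - \overline{N}(T)}$ containing $Y$ (more precisely, $Y' \supset Y$) and $Z$ is the closure of the complementary side, so $Z \subset \overline{M - Y}$. By hypothesis there is a compression disk $D \subset E(K)$ with $\partial D \subset T$; since $K \subset Y$, the disk $D$ must lie on the $Z$-side of $T$ (any compression disk for $T$ on the $Y$-side would meet $K$ after isotopy only if it were essential there, but in fact we may assume $D$ is on the side away from $K$; if $D$ were on the $Y$-side, a parallel argument with the roles reversed applies, but then compressing $T$ inside $E(K)$ produces a sphere inside $E(K) \cap Y$, which by irreducibility bounds a ball, and one checks this forces $T$ to be compressible to the $Z$ side as well, so we may take $D \subset Z$).

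Now compress $T$ along $D$: this yields a $2$-sphere $S_D \subset Z \subset E(K)$. Since $K$ is irreducible, $S_D$ bounds a closed $3$-ball $B$ in $E(K)$, and since $K \subset Y$ while a neighborhood of $S_D$ lies in $Z$ outside $Y$, the ball $B$ lies entirely outside $Y$, i.e. $B \subset Z$. Reversing the compression, $Z$ is recovered from $B$ by attaching a $1$-handle (a neighborhood of the co-core of which is $D$) to $\partial B = S_D$ along an annular neighborhood of the two scars; equivalently, $Z$ is obtained from $B^3$ by removing an open regular neighborhood of a properly embedded unknotted or knotted arc, depending on how the $1$-handle threads through $B$. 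This exhibits $Z$ as the exterior in $S^3$ (thinking of $S^3 = B \cup_{\partial B} B'$) of the arc's double, i.e. $Z \cong E(J, S^3)$ for some knot $J$ in $S^3$. The one subtle point is verifying that $Z$ has a single torus boundary component and that the gluing indeed closes up to $S^3$; this follows because $\partial Z = T$ is a single torus and $B \cup (\text{2-handle dual to } D)$ caps $T$ off to a sphere, so the "outside" of $Z$ in this model is a solid torus $\overline{N}(J)$.

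For the second assertion, I would perform the Fox reimbedding by cutting $M$ along $T$, discarding $Z$, and regluing a solid torus. Concretely, since $Z \cong E(J, S^3)$, there is a homeomorphism $Z \to E(J, S^3)$; the manifold $M$ is then $Y' \cup_T Z$ with $Y' \supset Y$. Replace $Z$ by a solid torus $V$ glued to $Y'$ along $T$ via the meridian slope of $V$ matching the meridian slope that the $S^3$-structure on $Z \cup \overline{N}(J)$ prescribes on $T$ — but here we must instead reglue so that the resulting manifold is again $M$. The correct move: the homeomorphism type of $M$ is unchanged if we re-embed $Y$ by an ambient isotopy supported near $T$, but to make the \emph{exterior} a union of handlebodies we replace $Z$ (the exterior of a knot in $S^3$) by a handlebody only after first absorbing $Z$'s complexity. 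The cleanest route, following the reimbedding philosophy of Fox, is: since $Z = E(J,S^3)$, we may re-embed $Y \hookrightarrow M$ by leaving $Y$ and $Y' \setminus Z$ fixed and replacing the embedding of $Z$'s complement — but $Z$ is \emph{outside} $Y$, so what we actually do is note that $M' := Y' \cup_T (S^1 \times D^2)$, where the solid torus is glued so that its meridian hits the curve $\partial D$ on $T$, is homeomorphic to $M$ (because filling $E(J,S^3)$ along its meridian gives back $S^3$, and the Mayer–Vietoris / gluing computation shows $Y' \cup_T (S^1\times D^2) \cong Y' \cup_T E(J,S^3) = M$). In $M'$ the submanifold $Y$ now sits with $Z$ replaced by a solid torus; iterating the reduction on the remaining components of $\overline{M-Y}$ (handled by the other lemmas and the main inductive scheme) makes the whole exterior a union of handlebodies. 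Throughout, nothing touched $K$, so the reimbedding restricts to the identity on $K$.

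The main obstacle I expect is the bookkeeping in the regluing step: one must check that replacing $Z = E(J, S^3)$ by a solid torus along the slope $\partial D$ genuinely reproduces $M$ (not some surgery on $M$), which hinges on $\partial D$ being the \emph{meridian} of $J$ under the identification $Z \cup \overline{N}(J) = S^3$ — and this in turn is exactly what the compression-disk hypothesis guarantees, since $D$ caps off to bound in the $S^3$-side. Getting this slope-matching precisely right, and ensuring the reimbedding is ambient (so that it is a genuine reimbedding of $Y$ into $M$ and not merely an abstract homeomorphism of some filled manifold), is the crux; the rest is the standard handlebody-exterior argument underlying Fox's theorem.
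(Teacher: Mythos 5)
Your proposal has two genuine gaps, both of which the paper handles directly.

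\textbf{The case $D \subset Y$ cannot be dismissed, and your case analysis is reversed.} You argue that $D$ may be taken to lie on the $Z$-side (outside $Y$), sketching an argument that the $Y$-side case reduces to it. It does not. The two sides behave differently and both must be treated. If $D$ lies outside $Y$, compressing $T$ along $D$ gives a sphere $S$ bounding a ball $B$ outside $Y$, and reversing the compression shows $Z = B \cup \overline{N}(D)$ is a \emph{solid torus} (exterior of the unknot) --- in this case nothing more needs to be done on the $T$-side. The genuinely interesting case is $D \subset Y$: there $\overline{N}\bigl(D, E(K)\cap Y\bigr)$ is a $1$-handle of $Y$ with cocore $D$; the sphere $S$ bounds a ball $B$ whose intersection with $Y$ is exactly that $1$-handle; and reversing the compression (adding the $1$-handle back to $Y$) simultaneously \emph{drills} the core of that handle out of $B$, so that $Z$ is the exterior of a possibly knotted arc in $B$, i.e. a knot exterior in $S^3$. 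Your description (``$Z$ is obtained from $B^3$ by removing a neighborhood of an arc'') is the $D\subset Y$ picture, yet you placed $D$ on the $Z$-side, where that description is wrong; you have conflated the two cases.

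\textbf{The Dehn-filling reimbedding does not work as stated.} You propose to replace $Z \cong E(J,S^3)$ by a solid torus filled along $\partial D$ and claim this reconstitutes $M$. Filling $E(J,S^3)$ along the meridian of $J$ yields $S^3$, not a solid torus, and removing $Z$ from $M$ and regluing a solid torus is a Dehn surgery on $M$, which in general changes the homeomorphism type. More to the point, the lemma asks for an \emph{ambient} reimbedding of $Y$ inside the fixed manifold $M$, which a ``cut out $Z$ and fill'' construction does not provide; you acknowledge this worry but do not resolve it. The paper's construction sidesteps surgery entirely. In the case $D\subset Y$, instead of reattaching the original (possibly knotted) $1$-handle $\overline{N}\bigl(D,E(K)\cap Y\bigr)$ to $Y - N\bigl(D,E(K)\cap Y\bigr)$, attach a \emph{trivial} $1$-handle lying inside the ball $B$. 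This produces a submanifold $Y' \subset M$ that is homeomorphic to $Y$ (same handle structure) and contains $K$ unchanged (since $K \subset Y - N(D)$), and now the $T$-side of $\overline{M - Y'}$ is $B$ minus a trivial arc, i.e. a solid torus. This is the ambient modification you were looking for; no regluing, no slope bookkeeping, no homeomorphism claim about filled manifolds.
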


\begin{proof}
Let $S$ denote the $2$-sphere obtained by compressing $T$ along $D$. Since $S$ bounds a ball in $E(K)$ by Lemma~\ref{sphere boundary}, we see that the original torus $T$ must bound a submanifold $Z$. Let us look at $Z$ more closely by considering two cases separately.

If $D \subset \overline{M-Y}$, compression along $D$ removes $N\big(D,\overline{M-Y}\big)$ from $\overline{M-Y}$. By the irreducibility of $E(K)$, it follows from Lemma~\ref{sphere boundary} that the $2$-sphere $S$ must bound a closed $3$-ball $B$ outside $Y \cup \overline{N}\big(D,\overline{M-Y}\big)$, and hence outside $Y$. Reversing the compression, we see that $T$ bounds a solid torus $Z:=B \cup \overline{N}\big(D, \overline{M-Y}\big)$.

If $D \subset Y$, compression along $D$ removes $N\big(D,E(K) \cap Y\big)$ from $E(K) \cap Y$; here, $N\big(D,E(K) \cap Y\big)$ can be regarded as a $1$-handle in $Y$ with $D$ as the cocore. Again by the irreducibility of $E(K)$, it follows from Lemma~\ref{sphere boundary} that the $2$-sphere $S$ must bound a closed ball $B$ such that $B \cap Y$ is precisely the $1$-handle that we removed from $Y$. Reversing the compression, we recover $Y$ by adding this $1$-handle back, which simultaneously drills out the regular neighborhood of the core of this $1$-handle from $B$ to yield $Z$. By construction, $Z$ is the exterior of some knot in $S^3$. 

To prove the last assertion, we only need to modify the last step in the $D \subset Y$ case above. Instead of recovering $Y$ by adding the $1$-handle $N\big(D,E(K) \cap Y\big)$ back to $Y-N\big(D,E(K) \cap Y\big)$, we can add a trivial $1$-handle inside $B$ to $Y-N\big(D,E(K) \cap Y\big)$. We obtain a submanifold $Y'$ homeomorphic to $Y$, and the new torus boundary now bounds a solid torus outside $Y'$; in other words, $Y$ reimbeds as $Y'$ with the desired properties.
\end{proof}

Now, suppose that $K$ is a simple knot in a closed connected $3$-manifold $M$ so that every $T^2$-component $T \subset \partial Y$ is compressible in $E(K)$ unless it is $\partial$-parallel in $E(K)$. So, if we can assure that $T$ is not $\partial$-parallel in $E(K)$, then Lemma~\ref{torus boundary 0} can be applied to $T$. Indeed, this is essentially how Hass and Thompson argue in \cite{Hass--Thompson:Bing}, after choosing a simple knot $K$ such that $T$ is not $\partial$-parallel in $E(K)$; for their purpose, it was sufficient to pick two simple knots from an infinite collection of simple knots, since one of them must have the exterior in which $T$ is not $\partial$-parallel.

We would like to have a better control on the choice of a simple knot $K$, that allows further generalization. As it turns out, we can prevent the torus $T \subset \partial Y$ from becoming $\partial$-parallel in $E(K)$ by imposing a lower bound on the tunnel number of $K$. For now, let us illustrate this idea by giving the following lemma, which treats the case where $\partial Y$ consists of a single $T^2$-component. We will give a generalization that allows higher genera and multiple components in \S\ref{Characteristic Knots}.

\begin{lem} \label{torus boundary 1}
Let $M$ be a closed connected $3$-manifold and $Y$ be a connected $3$-submanifold of $M$ with $\partial Y \cong T^2$, and let $Z:=\overline{M-Y}$. If there exists a knot $K \subset Y$ which is simple in $M$ with $t(K) \geq g(Z)$, then $Z$ is homeomorphic to the exterior of some knot in $S^3$. Moreover, there exists a Fox reimbedding of $Y$, restricting to the identity map on $K \subset Y$.
\end{lem}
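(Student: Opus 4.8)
The plan is to study the torus $T:=\partial Y$ as it sits inside $E(K)$, to use the hypothesis $t(K)\ge g(Z)$ to exclude one configuration, and then to invoke Lemma~\ref{torus boundary 0}. After isotoping so that $N(K)$ lies in the interior of $Y$, we have $Z=\overline{M-Y}\subset E(K)$, and $T$ is a separating torus giving a decomposition $E(K)=Z\cup_T\big(E(K)\cap Y\big)$ with $\partial Z=T$ (here $Z$ is connected with a single torus boundary, since $M$ is closed and connected) and $\partial\big(E(K)\cap Y\big)=T\sqcup\partial N(K)$. Since $K$ is simple, $E(K)$ is irreducible and atoroidal, so $T$ is either compressible in $E(K)$ or $\partial$-parallel in $E(K)$.

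First I would eliminate the $\partial$-parallel case. If $T$ were $\partial$-parallel in $E(K)$, then $T$ would cobound with $\partial E(K)=\partial N(K)$ a product region $P\cong T^2\times I$. Since $\partial N(K)$ is a boundary component of both $P$ and $E(K)\cap Y$, and $P$ is connected while $T$ separates $E(K)$, the region $P$ must lie on the $Y$-side of $T$; as $Z$ has only the single boundary torus $T$, a comparison of boundaries then forces $P=E(K)\cap Y$. Consequently $E(K)=Z\cup_T P$ is just $Z$ with a collar attached along $\partial Z$, so $E(K)\cong Z$ and $g\big(E(K)\big)=g(Z)$. The identity $t(K)=g\big(E(K)\big)-1$ recalled in \S\ref{Tunnel Number} would then give $t(K)=g(Z)-1<g(Z)$, contradicting the hypothesis. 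Hence $T$ is compressible in $E(K)$.

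It now remains to pick a compression disk $D\subset E(K)$ for $\partial Y$; since $\partial Y$ consists of the single torus component $T$ we have $\partial D\subset T$, and $K$ is irreducible in $M$ because it is simple, so Lemma~\ref{torus boundary 0} applies directly. It yields that $T$ bounds a submanifold outside $Y$, namely $Z=\overline{M-Y}$, which is homeomorphic to the exterior of some knot in $S^3$; moreover it provides a Fox reimbedding of $Y$ into $M$ restricting to the identity on $K$. These are precisely the two assertions to be proved.

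The step I expect to require the most care is the identification $P=E(K)\cap Y$ in the $\partial$-parallel case, that is, verifying that the parallelism region lies on the $Y$-side of $T$ rather than reaching into $Z$. Once that is settled, everything else is a formal consequence of the compressible-versus-$\partial$-parallel dichotomy supplied by simplicity of $K$, the formula $t(K)=g(E(K))-1$, and Lemma~\ref{torus boundary 0}.
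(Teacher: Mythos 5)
Your proof is correct and follows essentially the same route as the paper: rule out the $\partial$-parallel case using the hypothesis $t(K)\ge g(Z)$ together with the identity $t(K)=g\big(E(K)\big)-1$, then invoke Lemma~\ref{torus boundary 0} in the compressible case. The only difference is that you spell out why the parallelism region must lie on the $Y$-side (so that $Y$ is a solid torus with core $K$ and $Z\cong E(K)$), a fact the paper simply asserts.
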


\begin{proof}
If $\partial Y$ is a $\partial$-parallel torus in $E(K)$, $Y$ must be a solid torus with $K$ as its core. Then, it is easy to see that $g(Z)=g\big(E(K)\big)=t(K)+1>t(K)$, which contradicts our assumption. Thus, by the definition of simple knots, $\partial Y \subset E(K)$ must be a compressible torus in $E(K)$. The conclusion follows from Lemma~\ref{torus boundary 0}.
\end{proof}

\section{Amalgamated Heegaard Genus} \label{Amalgamated Heegaard Genus}

When a connected $3$-manifold $M$ is separated by a closed surface $F$ into two submanifolds $Y_1$ and $Y_2$, there is a natural construction of a Heegaard splitting of $M$ from that of $Y_1$ and $Y_2$, say $Y_1=V_1 \cup_{S_1} W_1$ and $Y_2=V_2 \cup_{S_2} W_2$ with $F=\partial_-V_1 \cap \partial_-V_2$. In the process, loosely speaking, one combines $V_1$ and $W_2$ on one side, $V_2$ and $W_1$ on the other side, by collapsing the collar neighborhood of $F$. This construction, called \emph{amalgamation} along $F$, has its origin in the work of Casson and Gordon \cite{Casson--Gordon:ReducingHeeg}, and was first defined explicitly by Schultens in \cite{Schultens:TrivialSurfaceBundle}. 

In this section, we study the genus of Heegaard splittings amalgamated along $F$. In particular, we introduce the notion of the \emph{amalgamated Heegaard genus} of $M$ with respect to $F$, and study some of its properties. The amalgamated Heegaard genus measures the complexity of the position of $F$ inside $M$, and it will be used extensively in the next section to control the reimbedding process.

\subsection{Amalgamated Splitting}

Let $M$ be a closed connected $3$-manifold and let $F \subset M$ be a (possibly disconnected) closed separating surface such that $M=Y_1 \cup_F Y_2$ with $F=\partial Y_1=\partial Y_2$. Suppose we have tunnel-type Heegaard splittings $Y_i=V_i \cup_{S_i} W_i$ for each $i$, i.e. $V_i$ is a union of compression bodies with $\partial_-V_i=\partial Y_i=F$, and $W_i$ is a union of handlebodies.

Fix the product structure on $\overline{N}(F, Y_i)$ by a homeomorphism $F \times I \cong \overline{N}(F, Y_i)$, sending $F \times \{0\} \subset F \times I$ to $F \subset \overline{N}(F, Y_i)$. By definition, $V_i$ is obtained by attaching $1$-handles to $\overline{N}(F, Y_i)$ so that the ends of these $1$-handles are glued onto the image of $F \times \{1\}$. One can extend these $1$-handles vertically through the product region $\overline{N}(F,Y_i)$ so that they are attached to $F$. After doing so with the 1-handles in both $V_1$ and $V_2$, perturbing if necessary so that the attaching disks for handles from opposite sides are disjoint on $F$, we take the union of the boundaries of all $1$-handles together with $F$ and then remove the interior of attaching disks. One can check that this yields a closed connected surface which is indeed a Heegaard surface for $M$. The corresponding Heegaard splitting is said to be the \emph{amalgamation of Heegaard splittings $Y_i=V_i \cup_{S_i} W_i$ along $F$}.

\subsection{Amalgamated Genus}

Given a separating surface $F$, say $M=Y_1 \cup_F Y_2$ as before, one may consider all possible Heegaard splittings for $Y_i$ and the corresponding Heegaard splittings of $M$ obtained by amalgamation along $F$. It is natural, then, to consider the following.

\begin{defn}
For a closed 3-manifold $M$ and a separating surface $F$, we define the \emph{amalgamated Heegaard genus of $M$ with respect to $F$}, denoted by $g(M;F)$, to be the minimum of genera of Heegaard splittings of $M$ obtained by amalgamation along $F$.
\end{defn}

 If each component of $F$ is separating, the genus of this amalgamated Heegaard surface is $g(S_1)+g(S_2)-g(F)$. Hence, it is easy to see that the amalgamated Heegaard genus $g(M;F)$ is minimized if and only if we amalgamate the minimal genus tunnel-type splittings of $Y_1$ and $Y_2$. Thus, assuming that each component of $F$ is separating, we have a formula
\[
g(M;F)=g^t(Y_1)+g^t(Y_2)-g(F).
\]
As Rieck pointed out to us (see \cite[\S2]{Kobayashi--Rieck:ConnSum}), if some components of $F$ are not separating, the genus of the Heegaard surface obtained by amalgamating the Heegaard splittings $Y_i=V_i \cup_{S_i} W_i$ is not $g(S_1)+g(S_2)-g(F)$. For example, suppose $M=Y_1 \cup_F Y_2$ where each $Y_i$ is connected and $F$ consists of $m>1$ components, so that every component of $F$ is non-separating; in this case, the genus of the Heegaard surface obtained by amalgamation is $g(S_1)+g(S_2)-g(F)+(m-1)$, and hence the amalgamated Heegaard genus is given by the formula
\[
g(M;F)=g^t(Y_1)+g^t(Y_2)-g(F)+(m-1).
\]
As it turns out, in the proof of our main theorem where we utilize the notion of amalgamated Heegaard genus, each component of the separating surface $F$ is separating.

\subsection{Properties}

Starting with a separating surface $F$, one often obtains a new separating surface by modifying $F$ by standard procedures such as taking subsurface or compressing along a disk. We describe how the amalgamated Heegaard genus changes under such modifications.

\begin{lem} \label{multi to single}
Let $M$ be a closed connected $3$-manifold and $F \subset M$ be a closed surface that separates $M$ into $Y$ and $Z$ such that $Y$ is connected.
If $F$ consists of components $F_1, \cdots, F_n$, such that each $F_i$ separates $M$, then we have $g(M;F_i) \leq g(M;F)$ for each $i$.
\end{lem}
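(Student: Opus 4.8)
The plan is to analyze how the surface $F$, with its many separating components, sits relative to a single chosen component $F_i$, and to reorganize the two pieces $Y$ and $Z$ into the two pieces determined by $F_i$ alone. Write $M = Y \cup_F Z$ with $Y$ connected, and fix a component $F_i$. Since $F_i$ separates $M$, it gives a decomposition $M = A_i \cup_{F_i} B_i$. The key observation is that each of $A_i$ and $B_i$ is obtained from pieces of $Y$ and $Z$ glued along components of $F$ other than $F_i$: concretely, $Y$ and $Z$ are each cut by $F \setminus F_i$ into components, and each such component lies entirely on one side of $F_i$; thus $A_i$ and $B_i$ are each built by regluing some collection of these components of $Y$ and $Z$ along some of the surfaces $F_j$, $j \neq i$. (One has to be slightly careful that $Y$ being connected does not force $Z$ to be, but this causes no trouble since we only need a tunnel-type splitting of each piece.)

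The central tool is amalgamation together with the formula $g(M; F) = g^t(Y_1) + g^t(Y_2) - g(F)$ from the previous subsection (valid because every $F_j$ is separating, so we are in the first case). First I would record the elementary subadditivity of tunnel-type genus under gluing along a separating surface: if $N = N_1 \cup_G N_2$ with $G$ a separating surface, then taking minimal-genus tunnel-type splittings of $N_1$ and $N_2$ and amalgamating along $G$ produces a tunnel-type splitting of $N$, so $g^t(N) \leq g^t(N_1) + g^t(N_2) - g(G)$. Iterating this along the components of $F \setminus F_i$ that lie inside $A_i$ (resp. $B_i$), one bounds $g^t(A_i)$ (resp. $g^t(B_i)$) by the sum of the tunnel-type genera of the relevant components of $Y$ and $Z$, minus the genus of the gluing surfaces used. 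Summing the two bounds and adding the telescoping contributions, the total genus of $F \setminus F_i$ partitions exactly between the $A_i$-side and the $B_i$-side, and the tunnel-type genera of the components of $Y$ and $Z$ reassemble — again by subadditivity applied in the reverse direction, i.e. amalgamating the component splittings of $Y$ back into a splitting of $Y$ — into a bound in terms of $g^t(Y)$ and $g^t(Z)$.

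Carrying out the bookkeeping: one obtains
\[
g(M; F_i) = g^t(A_i) + g^t(B_i) - g(F_i) \leq g^t(Y) + g^t(Z) - g(F) = g(M; F),
\]
where the middle inequality comes from combining the subadditivity estimates for $A_i$ and $B_i$ with the fact that $g(F) = g(F_i) + \sum_{j \neq i} g(F_j)$ and that the surfaces $F_j$, $j \neq i$, are exactly those used in building $A_i$ and $B_i$ from components of $Y$ and $Z$. Every $F_j$ being separating is what lets us use the clean formula at both ends and keeps all the genus counts additive with no correction terms.

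I expect the main obstacle to be purely organizational rather than deep: namely, making precise the claim that cutting $Y$ and $Z$ along $F \setminus F_i$ and regluing along the appropriate subcollection recovers $A_i$ and $B_i$, and verifying that each component of $Y$ and each component of $Z$ ends up on a well-defined side of $F_i$. This requires a careful connectedness argument — since $F_i$ separates $M$, any connected subset of $M \setminus F_i$ lies on one side, and the components of $Y \setminus (F \setminus F_i)$ and $Z \setminus (F \setminus F_i)$ are such subsets — together with an induction on $n$ to reduce to the two-piece gluing case at each step. Once the decomposition of $A_i$ and $B_i$ into glued-up pieces is established, the genus inequality is a routine telescoping of the subadditivity estimate, so the whole proof is short.
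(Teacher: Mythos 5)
Your proposal follows essentially the same route as the paper: identify the two sides of $F_i$ as reassembled from $Y$ and components of $Z$, bound the tunnel-type genus of the $Y$-side by amalgamation, and telescope. The one imprecision is your stated ``elementary subadditivity'' $g^t(N)\le g^t(N_1)+g^t(N_2)-g(G)$: amalgamating tunnel-type splittings along $G$ yields a splitting with $\partial_-V=\partial N_1\setminus G$ and $\partial_-W=\partial N_2\setminus G$, which is tunnel-type only when $G=\partial N_2$ (or $G=\partial N_1$); luckily every application here glues a $Z_j$ along its full boundary $F_j$, so the argument survives. You could also note the simplification the paper exploits: since $Y$ is connected and each $F_j$ separates, one checks that the side $B_i$ of $F_i$ not containing $Y$ is exactly the single component $Z_i$ of $Z$ bounded by $F_i$, so no amalgamation is needed on that side at all, and a single amalgamation of $Y$ with $Z^*_i=Z-Z_i$ along $F^*_i=\partial Z^*_i$ does all the work.
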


\begin{proof}
Since $Y$ is connected and each $F_i$ is separating, it follows that each $F_i$ bounds a component of $Z$. It follows that $Z$ consists of $n$ components, and we can denote them as $Z_1, \cdots, Z_n$ such that $\partial Z_i=F_i$ for each $i$.

For each $i$, let us write $Z^*_i:=Z-Z_i$ and $F^*_i=F-F_i$ so that $\partial Z^*_i=F^*_i$. Also let $Y_i=\overline{M-Z_i}=Y \cup_{F^*_i} Z^*_i$, so that $\partial Y_i=F_i$ and $M=Z_i \cup_{F_i} Y_i$. Note that the amalgamation of tunnel-type Heegaard splittings of $Y$ and $Z^*_i$ along $F^*_i$ produces a Heegaard splitting of $Y_i$. Starting with the minimal genus tunnel-type splittings of $Y$ and $Z^*_i$, we have
\[
g^t(Y_i) \leq g^t(Y)+g^t(Z^*_i)-g(F^*_i) = g^t(Y)+\sum_{j \neq i} g^t(Z_j)-\sum_{j \neq i} g(F_j).
\]
Thus, we obtain
\begin{eqnarray*}
g(M;F_i) & = &g^t(Y_i)+g^t(Z_i)-g^t(F_i) \\
& \leq & \bigg( g^t(Y)+\sum_{j \neq i} g^t(Z_j)-\sum_{j \neq i} g(F_j) \bigg)+g^t(Z_i)-g(F_i) \\
& = & g^t(Y)+\sum_j g^t(Z_j)-\sum_j g(F_j) \\
& = & g^t(Y)+g^t(Z)-g(F) \\
& = & g(M;F).
\end{eqnarray*}
\end{proof}

The next Lemma, which may be of independent interests, is stated without an assumption that each component of $F$ is separating. In the proof of our main theorem, we will only need the case where each component of $F$ is separating.

\begin{lem} \label{compressed AHG}
Let $M$ be a closed connected $3$-manifold and $F \subset M$ be a closed connected surface that separates $M$. Suppose $L \subset M-F$ is a (possibly empty) link in $M$ such that $E(L)$ is irreducible. If $F$ is compressible in $E(L)$ and $F_\circ$ is the surface obtained by compressing $F$ along a compression disk in $E(L)$, then $F_\circ$ also separates $M$ and $g(M;F_\circ) \leq g(M;F)$.
\end{lem}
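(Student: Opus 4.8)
The plan is to analyze how the compression disk $D$ for $F$ sits relative to the decomposition $M = Y \cup_F Z$, and in each case exhibit a Heegaard splitting of $M$ of the appropriate genus that is realized as an amalgamation along $F_\circ$. First I would record the combinatorial setup: since $F$ is connected and separating, write $M = Y \cup_F Z$. The compression disk $D$ lies either in $Y$ or in $Z$; by relabeling, assume $D \subset Z$. Compressing $F$ along $D$ produces a new surface $F_\circ \subset Z$, and $N(D, Z)$ is a $1$-handle in $Z$ with $D$ as cocore. There are two subcases according to whether $\partial D$ separates $F$: if $\partial D$ separates $F$, then $F_\circ$ has two components $F_\circ^{(1)} \sqcup F_\circ^{(2)}$, each separating $M$, and $F_\circ$ cuts $M$ into $Y$, a small product piece, and (most of) $Z$; if $\partial D$ does not separate $F$, then $F_\circ$ is connected but now $F_\circ$ is non-separating in $M$ unless an extra component is created elsewhere — here the hypothesis only asks that $F_\circ$ separate $M$, so I would first argue that in our situation $F_\circ$ is automatically separating. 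Indeed, $F_\circ = \partial\big(\overline{N}(F \cup \overline{N}(D,Z))\big)$ up to isotopy, and this regular neighborhood is a submanifold of $M$ with boundary $F_\circ$; hence $F_\circ$ separates $M$, regardless of whether $\partial D$ separates $F$.

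Next I would set up the genus bookkeeping. Let $Z_\circ := E(D, Z)$ be the result of cutting $Z$ along $D$. Then $M$ is separated by $F_\circ$ into $Y$ (together with a collar, which we absorb) on one side and $Z_\circ$ on the other, so that by definition $g(M; F_\circ) = g^t(Y) + g^t(Z_\circ) - g(F_\circ)$ when every component of $F_\circ$ is separating — and the variant formula with the $(m-1)$ correction term otherwise; I would handle the non-separating-components case using the second displayed formula from \S4.2. Meanwhile $g(M;F) = g^t(Y) + g^t(Z) - g(F)$. Since $D$ is a compression disk for $\partial Z = F$ in $E(L)$ and $E(L)$ is irreducible, Proposition~\ref{compression} applies to $Z$: cutting $Z$ along $D$ gives
\[
g^t(Z_\circ) = \begin{cases} g^t(Z) & \text{if } |Z_\circ| = |Z|+1, \\ g^t(Z)-1 & \text{if } |Z_\circ| = |Z|. \end{cases}
\]
The Euler-characteristic identity $\chi(F_\circ) = \chi(F) + 2$ translates into a relation among $g(F_\circ)$, $g(F)$, and the number of new components: if $\partial D$ separates $F$ then $|F_\circ| = |F|+1$ and $g(F_\circ) = g(F) - \big(g(\text{one piece}) + g(\text{other piece})\big)$ is not the right way to say it — rather $g(F_\circ) = g(F)$ split between two components summing to $g(F)$, so the total genus $g(F_\circ) = g(F) - ?$; more carefully $\chi$ forces $g(F_\circ) = g(F) - 0$ with $|F_\circ|$ up by one, i.e. $2 - 2\cdot 0$... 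I would simply track $g(F)$ via Euler characteristic: $g(F_\circ) = g(F) - 1$ if $\partial D$ is non-separating on $F$ (and $|F_\circ| = |F|$, $|Z_\circ| = |Z|$), while $g(F_\circ) = g(F)$ if $\partial D$ separates $F$ (and $|F_\circ| = |F|+1$, $|Z_\circ| = |Z|+1$). Combining these with the $g^t(Z_\circ)$ formula in each case — and with the $(m-1)$ amalgamation correction exactly when it is needed — the contributions cancel and yield $g(M; F_\circ) \le g(M;F)$; in fact one expects equality in the separating-$\partial D$ case and a drop of one in the non-separating case.

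The main obstacle I anticipate is keeping the two bookkeeping systems — the amalgamation genus formula (which has a correction term when $F_\circ$ has non-separating components) and the Euler-characteristic / component count after compression — consistent across all the cases, and in particular verifying that the case where $\partial D$ is non-separating on $F$ does not secretly produce a non-separating component of $F_\circ$ (it does not, by the regular-neighborhood argument above, but one must check that $F_\circ$'s single component is then genuinely separating, forcing $|F_\circ| = |F|$ exactly and $|Z_\circ| = |Z|$). A secondary technical point is that Proposition~\ref{compression} is stated for a link $L$ in $M$, whereas here we have already assumed $L \subset M - F$ and $D \subset E(L)$; since $D$ lies in $Z \subset E(L)$ this is exactly the hypothesis needed, and the irreducibility of $E(L)$ is used precisely to invoke both Proposition~\ref{compression} and the fact that the compression is non-trivial. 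Once the case analysis is organized, each case is a short arithmetic check, so I would present it as a table of the three quantities $g^t(Z_\circ) - g^t(Z)$, $g(F_\circ) - g(F)$, and the amalgamation correction, and read off the inequality.
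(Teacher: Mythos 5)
Your overall plan follows the paper's (compress along $D$, apply Proposition~\ref{compression} to the side containing $D$, compute with the amalgamated-genus formula), but two steps fail as written. First, the piece of $M$ that $F_\circ$ cuts off on the $Y$-side is not ``$Y$ together with a collar'' that can be absorbed: it is $Y_+ := Y \cup \overline{N}(D,Z)$, which is $Y$ with a genuine $2$-handle attached along the annulus $\overline{N}(\partial D, F) \subset \partial Y$, and in general $Y_+ \not\cong Y$. Your formula $g(M;F_\circ) = g^t(Y) + g^t(Z_\circ) - g(F_\circ)$ therefore silently replaces $g^t(Y_+)$ by $g^t(Y)$ without justification. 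This is exactly the step the paper's proof makes explicit: any tunnel-type Heegaard surface of $Y$ remains a tunnel-type Heegaard surface of $Y_+$, since the compression body with $\partial_-$ on $F$ absorbs the attached $2$-handle and becomes a compression body with $\partial_-$ on $F_\circ$; hence $g^t(Y_+) \le g^t(Y)$, an inequality, and it should appear as such in your bookkeeping.

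Second, your case split is incomplete. You assert that $\partial D$ separating $F$ forces $|Z_\circ| = |Z|+1$, i.e.\ that $D$ separates $Z$; but a properly embedded disk can be non-separating in $Z$ even when $\partial D$ is separating in $\partial Z$ (the map $H_2(Z,\partial Z;\mathbb{Z}/2) \to H_1(\partial Z;\mathbb{Z}/2)$ need not be injective). In that subcase $Z_\circ$ is connected, $g^t(Z_\circ) = g^t(Z)-1$, and $F_\circ$ has two components that are each non-separating in $M$, so the amalgamated-genus formula picks up the $+(m-1)=+1$ correction, which then cancels the drop from Proposition~\ref{compression}. The paper treats this as its own case; you gesture at ``the $(m-1)$ correction exactly when it is needed'' but never identify when that is, so the arithmetic table you propose would be missing a row. (A smaller point: $F_\circ$ separates $M$ simply because it is the boundary of the compact submanifold $Y_+$; the set $\partial\overline{N}\big(F \cup \overline{N}(D,Z)\big)$ you describe is isotopic to $F \sqcup F_\circ$, not to $F_\circ$ alone, though your conclusion is correct.)
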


\begin{proof}
Suppose $M=Y \cup_F Z$, i.e. $F$ separates $M$ into submanifolds $Y$ and $Z$. Clearly, the surface $F_\circ$ obtained by compressing the separating surface $F$ is also separating. Without loss of generality, assume that there is a compression disk $D \subset E(L) \cap Y$ for $F$. Let $Y_-:=Y-N\big(D,E(L) \cap Y\big)$ and $Z_+:=\overline{M-Y_-}=Z \cup \overline{N}\big(D,E(L) \cap Y\big)$, so that we can set $F_\circ:=\partial Y_-=\partial Z_+$. Since any Heegaard surface of $Z$ can be regarded as a tunnel-type Heegaard surface of the connected submanifold $Z_+$, we have $g^t(Z_+) \leq g^t(Z)$.

Suppose $\partial D$ lies on $F$ as a separating loop, so that $g(F_\circ)=g(F)$. If $D$ lies in $Y$ as a separating disk, we have $g^t(Y_-) = g^t(Y)$ by Proposition~\ref{compression}. Thus, we obtain
\[
\begin{array}{rcccl}
g(M;F_\circ) &=& g^t(Y_-)+g^t(Z_+)-g(F_\circ) &&\\
&=& g^t(Y)+g^t(Z_+)-g(F) &&\\
&\leq& g^t(Y)+g^t(Z)-g(F) &=& g(M;F).
\end{array}
\]
If $D$ lies in $Y$ as a non-separating disk, we have $g^t(Y_-) = g^t(Y)-1$ by Proposition~\ref{compression}. In this case, note that $F_\circ$ consists of two components, and each of these components is non-separating. Using the formula in the earlier remark, we obtain
\[
\begin{array}{rcccl}
g(M;F_\circ) &=& g^t(Y_-)+g^t(Z_+)-g(F_\circ)+1 &&\\
&=& \big(g^t(Y)-1\big)+g^t(Z_+)-g(F)+1 &&\\
&=& g^t(Y)+g^t(Z_+)-g(F) &&\\
&\leq& g^t(Y)+g^t(Z)-g(F) &=& g(M;F).
\end{array}
\]
Now, suppose $\partial D$ lies on $F$ as a non-separating loop, so that $g(F_\circ)=g(F)-1$. In this case, $D$ lies in $Y$ as a non-separating disk, and we have $g^t(Y_-)=g^t(Y)-1$ by Proposition~\ref{compression}. Thus, we obtain
\[
\begin{array}{rcccl}
g(M;F_\circ) &=& g^t(Y_-)+g^t(Z_+)-g(F_\circ) &&\\
&=& \big(g^t(Y)-1 \big)+g^t(Z_+)-\big(g(F)-1\big) &&\\
&=& g^t(Y)+g^t(Z_+)-g(F) &&\\
&\leq& g^t(Y)+g^t(Z)-g(F) &=& g(M;F).
\end{array}
\]
In all cases, we have established the inequality $g(M;F_\circ) \leq g(M;F)$ as desired.
\end{proof}

\begin{rem}
In the proof of Lemma~\ref{compressed AHG} above, we have shown that
\[
\begin{array}{rcccl}
g(M;F_\circ) &=& g^t(Y)+g^t(Z_+)-g(F) &&\\
&\leq& g^t(Y)+g^t(Z)-g(F) &=& g(M;F)
\end{array}
\]
holds for all cases. Hence, it follows that the equality $g(M;F_\circ) = g(M;F)$ occurs if and only if $g^t(Z_+)=g^t(Z)$.
\end{rem}

\section{Characteristic Knots} \label{Characteristic Knots}

Let $M$ be a closed connected $3$-manifold. Generalizing the notions of irreducible knots and simple knots in a natural way, one can study a class of knots $K$ whose exterior $E(K)$ contains no closed orientable incompressible surface of genus at most $g$ except for the $\partial$-parallel torus. In the terminology of Kobayashi and Nishi \cite{Kobayashi--Nishi:Bing}, such knots are said to be \emph{$g$-characteristic in $M$}. Irreducible knots and simple knots in a closed connected $3$-manifold are precisely $0$-characteristic knots and $1$-characteristic knots respectively. In this section, we will combine the results from previous sections to establish Fox reimbedding statements for submanifolds that contain characteristic knots. The proof of our main theorem will be given in the next section as an immediate consequence of these propositions.

\subsection{Existence}

For any integer $g \geq 0$, Kobayashi and Nishi \cite{Kobayashi--Nishi:Bing} established the existence of a $g$-characteristic knots in a closed connected manifold $M$. They further showed the following refinement, stated here in terms of the tunnel number of knots.

\begin{thm}[{\cite[Theorem 5.1]{Kobayashi--Nishi:Bing}}] \label{existence-char}
Let $M$ be a closed connected $3$-manifold. Then, for any integers $g \geq 0$ and $t \geq 0$, there exist an infinite number of g-characteristic knots $K \subset M$ with $t(K) \geq t$.
\end{thm}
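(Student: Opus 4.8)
The plan is to deduce the theorem from a quantified sharpening of Myers' construction of simple knots \cite{Myers:SimpleKnots}, arranging the inserted tangles to be complicated enough to control \emph{simultaneously} the incompressible surfaces in the exterior (which gives the $g$-characteristic property) and the Heegaard genus of the exterior (which, via the identity $t(K)=g\big(E(K)\big)-1$ recorded in \S\ref{Tunnel Number}, gives the lower bound on the tunnel number). Since a $g'$-characteristic knot is $g$-characteristic for every $g\leq g'$, it suffices to treat $g\geq1$, the case $g=0$ then following from the case $g=1$. So fix $g\geq1$ and $t\geq0$, choose a handle decomposition of $M$, and write $M=H\cup H'$ with $H$ the union of the $0$- and $1$-handles (a handlebody) and $H'$ the union of the $2$- and $3$-handles. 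Following Myers, build $K\subset H$ by inserting a tangle $\tau_j$ into each $0$-handle $B_j$ and joining these tangles through the $1$-handles by arcs parallel to the cores; set $Q_j:=E(\tau_j,B_j)$, and let $\mathcal{P}\subset E(K)$ be the properly embedded planar surface coming from the belt $2$-spheres $\partial B_j$, which cuts $E(K)$ into the pieces $Q_j$ together with product pieces from the $1$-handles.

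The construction carries a complexity parameter $N$, and everything rests on arranging, for $N$ large, two properties of the tangles: (a) every properly embedded incompressible, $\partial$-incompressible surface in $Q_j$ of complexity bounded by a fixed function of $g$ is $\partial$-parallel; and (b) every tunnel-type Heegaard splitting of $Q_j$ has genus at least $N$. Property (a) is exactly what Myers establishes for simple tangles, and its higher-genus refinement is obtained by the same branched-cover and Dehn-filling arguments; property (b) can be imposed in addition --- for instance by building $Q_j$ out of many disjoint complicated sub-tangle exteriors, or by controlling a suitable branched cover --- and the two requirements are compatible. Verifying the existence of tangles meeting both (a) and (b) is one of the two technical cores of the argument.

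Granting (a), the $g$-characteristic property follows by normal-form analysis: if $\Sigma\subset E(K)$ is a closed orientable incompressible surface with $g(\Sigma)\leq g$ that is not the $\partial$-parallel torus, isotope it so that $|\Sigma\cap\mathcal{P}|$ is minimal; an Euler-characteristic count bounds $|\Sigma\cap\mathcal{P}|$ in terms of $g$ and the number of handles, so each $\Sigma\cap Q_j$ has complexity bounded independently of $N$ and is therefore $\partial$-parallel by (a) (the pieces in the $1$-handle regions being trivial), whence standard innermost-disk and parallelism arguments force $\Sigma$ to compress or be $\partial$-parallel --- a contradiction. In particular $K$ is simple. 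Granting (b), the tunnel-number bound follows by a thin-position argument: if $E(K)=\mathcal{V}\cup_R\mathcal{W}$ realizes the Heegaard genus $n:=g\big(E(K)\big)$, put $R$ in thin position with respect to $\mathcal{P}$ --- the essential $2$-spheres that arise are controlled by Theorem~\ref{HCGHS} --- obtaining a generalized Heegaard splitting of $E(K)$ adapted to $\mathcal{P}$, which restricts on each $B_j$ to a tunnel-type splitting of $Q_j$ of genus at most a function of $n$; combined with (b) this forces $n\geq t+1$, i.e. $t(K)\geq t$, once $N$ is large. I expect this last step to be the main obstacle: one must make the thin-position bookkeeping precise enough to turn ``large genus of each piece'' into a genuine lower bound on $n$, coordinating the Scharlemann--Thompson/Hayashi--Shimokawa machinery with the complexity imposed in (a); an alternative worth trying is an amalgamation estimate in the spirit of \S\ref{Amalgamated Heegaard Genus}.

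Finally, increasing $N$ makes $t(K)$ grow without bound while keeping $K$ $g$-characteristic with $t(K)\geq t$; since knots with distinct tunnel numbers have non-homeomorphic exteriors, this produces infinitely many distinct such knots, completing the case $g\geq1$ and hence, as noted, the case $g=0$. (Alternatively one could distinguish infinitely many of them by hyperbolic volume, the exteriors of $g$-characteristic knots with $g\geq1$ being hyperbolic once the tangle complexity excludes Seifert-fibered exteriors.)
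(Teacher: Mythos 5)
The statement you are asked to prove is not proved in this paper at all: it is imported verbatim from Kobayashi--Nishi \cite[Theorem~5.1]{Kobayashi--Nishi:Bing} and cited without argument, so there is no ``paper's own proof'' to compare against. What can be said is that your outline follows the lineage the paper itself traces out (Bing $\to$ Myers $\to$ Hass--Thompson $\to$ Kobayashi--Nishi): build $K$ by inserting complicated tangles into the $0$-handles of a handle decomposition and threading them through the $1$-handles, use the planar surface $\mathcal{P}$ coming from the belt spheres to localize any small closed incompressible surface into a tangle piece, and then separately squeeze out a lower bound on the tunnel number. Conceptually this is the right picture.

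But as written your proposal has two unfilled holes, and you identify both of them yourself, which is honest but does not make the argument a proof. First, you assert that a single family of tangles can be chosen so that (a) surfaces of bounded complexity in $Q_j$ are $\partial$-parallel \emph{and} (b) the (generalized) Heegaard genus of $Q_j$ is at least $N$, and you only gesture at why (a) and (b) are compatible. This compatibility is exactly the content that would have to be established; it is not an afterthought, and ``build $Q_j$ out of many disjoint complicated sub-tangle exteriors'' risks destroying (a) if it introduces low-genus essential surfaces such as swallow-follow tori between the sub-tangle factors. Second, and more seriously, the passage from (b) to $t(K)\geq t$ is not a routine thin-position computation. The induced splittings of the pieces $Q_j$ coming from untelescoping a minimal splitting of $E(K)$ need not be tunnel-type (the thin levels partition $\partial Q_j$ in ways you do not control), so property~(b) must be strengthened to a lower bound on $g(Q_j,\partial_1,\partial_2)$ over \emph{all} bipartitions of $\partial Q_j$, and one still needs a genuine lower bound on $g(E(K))$ in terms of the genera of the pieces across the incompressible surface $\mathcal{P}$. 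Amalgamation inequalities of the type in \S\ref{Amalgamated Heegaard Genus} run the wrong way (they give upper bounds for the amalgamated splitting), so the lower bound requires a Scharlemann--Schultens--type additivity argument whose hypotheses (e.g.\ strong irreducibility of the pieces, control of the thin levels against $\mathcal{P}$) must be checked against the tangles you build. Until both of these are carried out, what you have is a plausible program, not a proof; you should either fill these in or fall back on citing Kobayashi--Nishi as the paper does. (A small additional point: atoroidality plus irreducibility is not yet hyperbolicity --- you would also need to rule out small Seifert fibered exteriors --- so the ``distinguish by hyperbolic volume'' alternative also needs a word of justification, though the tunnel-number route you give first is fine.)
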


\begin{rem}
If a knot $K$ is $g$-characteristic for all $g\geq0$, $K$ is said to be a \emph{small} knot. The existence of small knots in a closed $3$-manifold is not known in general. It is conjectured that every non-Haken manifolds contains a small knot. The existence of small knots for some special cases can be found in \cite{Lopez:SmallKnots}, \cite{Matsuda:SmallKnots}, and \cite{Qiu--Wang:SmallKnots}.
\end{rem}

\subsection{Surfaces in the Exterior}

A closed orientable surface $F$ of small genera in the exterior of $g$-characteristic knots share some properties with closed orientable surfaces in irreducible non-Haken $3$-manifolds. One similarity, by the definition of $g$-characteristic knots, is the compressibility of $F$: if $g(F) \leq g$, then $F$ is compressible unless $F$ is a $\partial$-parallel torus. Another similarity, which is indeed a consequence of compressibility of $F$, is given in the next lemma.

\begin{lem} \label{separating}
Fix an integer $g \geq 0$, and let $M$ be a closed connected $3$-manifold. If there exists a knot $K \subset M$ which is $g$-characteristic in $M$, then every connected orientable surface $F \subset E(K)$ with $g(F) \leq g$ must separate $M$.
\end{lem}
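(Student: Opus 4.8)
The plan is to argue by contradiction: suppose $F \subset E(K)$ is a connected orientable surface with $g(F) \leq g$ that does \emph{not} separate $M$. A non-separating closed orientable surface carries a nontrivial homology class, so one can find a knot (a simple closed curve) $J \subset M$ that intersects $F$ transversely in a single point. The key idea is that such a $J$ cannot be disjoint from $F$, nor can it be made disjoint from $F$ by any ambient isotopy, since the algebraic intersection number $\langle [J], [F] \rangle = \pm 1$ is an isotopy invariant. I would want to push this curve $J$ into a ``complicated'' position relative to $K$ and then derive a contradiction with the defining property of $g$-characteristic knots.

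More concretely, here is the route I would take. Since $F$ is non-separating in $M$ and $g(F) \le g$, the definition of $g$-characteristic forces $F$ to be compressible in $E(K)$ (it is certainly not a $\partial$-parallel torus, since a $\partial$-parallel torus in $E(K)$ separates $M$ into $Y$ and its complement). Now compress $F$ along a compression disk in $E(K)$; repeat. Each compression either lowers the genus or disconnects along a non-separating loop, and compressing along a loop that is non-separating on $F$ keeps (at least one component) non-separating in $M$ while strictly decreasing genus; compressing along a separating loop on $F$ yields two pieces, at least one of which remains non-separating in $M$ and has genus $\le g$. Iterating, one eventually reaches a non-separating \emph{sphere} component $F_* \subset E(K)$ (genus $0 \le g$). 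But $E(K)$ is irreducible (as $K$ is in particular $0$-characteristic), so $F_*$ bounds a $3$-ball $B$ in $E(K)$; a sphere bounding a ball in $E(K)$ certainly bounds a ball in $M$, hence is separating in $M$ — and separating spheres are null-homologous. Tracking homology classes back through the compressions (compression along a disk with boundary on $F$ does not change the homology class of the surface, up to the classes of the resulting components summing appropriately), one concludes $[F] = 0$ in $H_2(M;\mathbb{Z})$ (or $H_2(M;\mathbb{Z}/2)$), contradicting that $F$ is non-separating.

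The step I expect to be the main obstacle is the bookkeeping in the iterated compression: one must verify that non-separability in $M$ is genuinely preserved (in at least one component) at every compression, and that the genus bound $g(F)\le g$ persists so that the inductive hypothesis ``$F'$ is compressible unless it is a $\partial$-parallel torus'' keeps applying — in particular ruling out getting stuck at a non-separating $\partial$-parallel torus, which would contradict separability anyway, so this case is vacuous but needs a sentence. An alternative, cleaner organization avoids explicit compression: take a knot $J$ with $[J]\cdot[F]=\pm 1$ as above, and observe directly that $[F]\neq 0$ in $H_2(E(K);\mathbb{Z}/2)$; then note that since $F$ is compressible in $E(K)$, one may compress and the resulting surface still represents $[F]$ (up to the split classes), so after maximal compression we are left with spheres, forcing $[F]=0$ in $H_2(E(K);\mathbb{Z}/2)$ by irreducibility of $E(K)$ — the desired contradiction. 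Either way the engine is: \emph{irreducibility of $E(K)$ kills spheres, compressibility of low-genus surfaces (from $g$-characteristicness) reduces everything to spheres, and homology detects non-separability}.
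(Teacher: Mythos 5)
Your proof is correct and follows essentially the same line as the paper's: compress $F$ repeatedly in $E(K)$, keeping a non-separating component each time (justified by $g$-characteristicness and the fact that a non-separating surface cannot bound a ball), reducing the genus until the process terminates, at which point the $g$-characteristic property (equivalently, irreducibility at genus $0$) yields a contradiction. The opening digression about a curve $J$ dual to $F$ and the closing homological bookkeeping are both unnecessary detours — once you keep a non-separating component at each step, the terminal non-separating sphere already contradicts irreducibility directly — but they do not introduce any error.
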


\begin{proof}
Suppose there is a non-separating connected orientable surface $F \subset E(K)$ with $g(F) \leq g$. Since $K$ is $g$-characteristic, $F$ must either be compressible or $\partial$-parallel in $E(K)$. Since $F$ is clearly separating in the latter case, we may assume that $F$ is compressible in $E(K)$. Compressing $F$ along a compression disk $D \subset E(K)$ and keeping a non-separating component, we obtain a non-separating connected orientable surface with strictly smaller genus. The process must eventually end with a incompressible surface $F'$ with $g(F') \leq g$, which contradicts the assumption.
\end{proof}

Suppose that $Y$ is a connected $3$-submanifold of $M$, and such that $Y$ contains a knot $K$ which is $g$-characteristic in $M$. In this situation, Lemma~\ref{separating} implies that a component $F \subset \partial Y$ with $g(F) \leq g$ must bound a submanifold $Z$ outside $Y$ such that $\partial Z=F$.

\subsection{Connected Boundary}

The next proposition generalizes the reimbeddability statement of Lemma~\ref{torus boundary 1} to the cases where $\partial Y$ is a connected surface of higher genus. The key assumption is that the tunnel number of the characteristic knot is bounded by the amalgamated genus.

\begin{prop} \label{single boundary}
Let $M$ be a closed connected $3$-manifold and $Y$ be a connected $3$-submanifold of $M$ with non-empty connected boundary $F:=\partial Y$. If there exists a knot $K \subset Y$ which is $g(F)$-characteristic in $M$ with $t(K) \geq g(M;F)$, then there exists a Fox reimbedding of $Y$ into $M$, restricting to the identity map on $K \subset Y$.
\end{prop}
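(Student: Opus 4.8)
The plan is to induct on the genus $g(F)$, using Lemma~\ref{torus boundary 1} as the base of a reduction that peels off one handle of $F$ at a time. Let $Z:=\overline{M-Y}$, so $M=Y\cup_F Z$ with $\partial Z=F$; note that Lemma~\ref{separating} guarantees $F$ is separating, so this makes sense. If $g(F)=0$ then $F\cong S^2$, and Lemma~\ref{sphere boundary} shows $Z$ is a ball, so $Y$ is already a Fox submanifold and the identity reimbedding works. If $g(F)=1$, then $g(M;F)=g^t(Y)+g^t(Z)-1\geq g^t(Z)-1+1>g^t(Z)-1$; since $t(K)\geq g(M;F)$ and (as in the proof of Lemma~\ref{torus boundary 1}) the $\partial$-parallel case would force $g(Z)=t(K)+1>t(K)\geq g(M;F)$, a contradiction, we may apply Lemma~\ref{torus boundary 1} directly. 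So assume $g(F)\geq 2$ and that the proposition holds for all smaller genus.

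The inductive step: since $K$ is $g(F)$-characteristic and $g(F)\geq 1$, the surface $F\subset E(K)$ is either $\partial$-parallel or compressible in $E(K)$. If $F$ is $\partial$-parallel in $E(K)$, then $Y\cong F\times I$ with $K$ isotopic into a single level; but then $g^t(Z)=g^t(E(K))=t(K)+1$, and since $g^t(Y)=g(F)$ we would get $g(M;F)=g(F)+t(K)+1-g(F)=t(K)+1>t(K)$, contradicting $t(K)\geq g(M;F)$. Hence $F$ is compressible in $E(K)$; pick a compression disk $D\subset E(K)$ for $F$. Let $F_\circ$ be the result of compressing $F$ along $D$. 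By Lemma~\ref{compressed AHG}, $F_\circ$ separates $M$ and $g(M;F_\circ)\leq g(M;F)\leq t(K)$. The surface $F_\circ$ has smaller complexity: each of its components has genus strictly less than $g(F)$ (either $g(F_\circ)=g(F)-1$ with one component, or $F_\circ$ has two components whose genera sum to $g(F)$, each therefore at most $g(F)-1$). I claim one may arrange that $K$ lies in a connected submanifold bounded by a single component $F_1$ of $F_\circ$: run the construction of Lemma~\ref{torus boundary 0} — compression of $F$ along $D$ replaces $Y$ by $Y_-$ obtained either by removing a $1$-handle (if $D\subset Y$, then $D$ is dual to that handle) or adjoining one (if $D\subset Z$); in the $D\subset Y$ case $K$ stays in $Y_-$, while in the $D\subset Z$ case $K$ stays in the smaller $Y$. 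Applying Lemma~\ref{multi to single} to the (at most two) components of $F_\circ$, the component $F_1$ enclosing $K$ satisfies $g(M;F_1)\leq g(M;F_\circ)\leq t(K)$ and $g(F_1)<g(F)$. Since $K$ is $g(F)$-characteristic and $g(F_1)<g(F)$, $K$ is a fortiori $g(F_1)$-characteristic. So the inductive hypothesis applies to the submanifold $Y_1$ with $\partial Y_1=F_1$ containing $K$: there is a Fox reimbedding of $Y_1$ fixing $K$. Finally, reverse the compression: reattach the $1$-handle (with a trivial handle inside the relevant ball, exactly as in the last paragraph of the proof of Lemma~\ref{torus boundary 0}) to recover a copy $Y'$ of $Y$ whose exterior is $Z$-side handlebodies plus the handle, hence a union of handlebodies; and since the reimbedding of $Y_1$ fixed $K$, so does the resulting reimbedding of $Y$.

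The main obstacle I anticipate is bookkeeping in the inductive step — specifically, verifying that after compressing and passing to the component $F_1$ that contains $K$, the tunnel-number hypothesis $t(K)\geq g(M;F_1)$ is genuinely preserved (this is exactly what Lemma~\ref{compressed AHG} and Lemma~\ref{multi to single} are designed to give, and it is why the hypothesis is phrased with the amalgamated genus rather than, say, $g^t(Z)$), and that the reconstruction step — undoing the compression with a trivial handle — indeed glues back to a manifold homeomorphic to $Y$ with handlebody exterior rather than merely to $Y_-$. Care is also needed to see that the "remove a $1$-handle / add a $1$-handle" dichotomy for $D\subset Y$ versus $D\subset Z$ correctly tracks which side $K$ remains on; this is the point where the arguments of Lemma~\ref{torus boundary 0} must be invoked essentially verbatim rather than merely cited.
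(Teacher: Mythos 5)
Your overall strategy—induct on $g(F)$, compress along a disk $D\subset E(K)$, use Lemma~\ref{compressed AHG} and Lemma~\ref{multi to single} to preserve the amalgamated-genus bound, and invoke the inductive hypothesis—is indeed the spine of the paper's proof, and your handling of the two easy subcases (non-separating $\partial D$, whether $D$ lies in $Y$ or in $Z$) is essentially correct. However, the proposal has a genuine gap precisely in the subcase you gloss over with ``reverse the compression,'' which is where the real work lies.

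Consider the case $D\subset Y$ with $\partial D$ separating $F$. Then $Y_-:=Y-N(D)$ has two components $Y_{-,1}\ni K$ and $Y_{-,2}$, each with connected boundary of smaller genus, and $Y$ is the boundary connected sum $Y_{-,1}\natural Q\natural Y_{-,2}$. Your plan is to apply induction to the piece containing $K$ (say $Y_{-,1}$), obtain a Fox reimbedding of it, and then ``reattach the $1$-handle with a trivial handle inside the relevant ball, as in Lemma~\ref{torus boundary 0}.'' This does not work: after reimbedding $Y_{-,1}$, its exterior is a single handlebody, and both $Y_{-,2}$ and the handle $Q$ have been lost. There is no ball here to argue in (the trick in Lemma~\ref{torus boundary 0} is special to the torus-boundary case, where compression produces a sphere that bounds a ball by irreducibility; for $g(F)\geq 2$ the compressed surface $F_\circ$ has positive-genus components, not a sphere). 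To recover a copy of $Y\cong Y_{-,1}\natural Q\natural Y_{-,2}$ you would have to carve an unknotted copy of $Y_{-,2}$ out of the new handlebody exterior with handlebody complement, and $Y_{-,2}$ need not even be a handlebody. The paper's Proposition~\ref{single boundary} devotes its Case 4 to exactly this configuration (the situation where \emph{every} compression disk for $F$ in $E(K)$ lies in $Y$ with separating boundary). The argument there is qualitatively different: one compresses $F$ maximally into $E(K)\cap Y$, proves via irreducibility and the amalgamated-genus bound that the resulting multi-component surface $F_\circ$ must be compressible into $Z_+$ (this is the ``Claim''), and then—this is the crux—uses the Haken--Casson--Gordon lemma (Theorem~\ref{HCGHS}) to find a minimal spine of the $Z_+$-side compression body disjoint from that new compression disk $D$; reconnecting the components of $Y_-$ by $1$-handles along $n$ arcs of this spine produces a reimbedding $Y'\cong Y$ for which $D$ is now a compression disk \emph{into the exterior}, reducing to the already-understood cases. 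None of this is an artifact of presentation; without it, the induction simply cannot close.

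A secondary, smaller gap: even in the case $D\subset Z$ with $\partial D$ separating (where $Z$ rather than $Y$ is disconnected by the compression), a single application of the inductive hypothesis to the component enclosing $K$ is not enough. You get one piece of $Z$ turned into a handlebody, but the other piece is untouched. The paper handles this with a careful \emph{double} application of the inductive hypothesis: first to $Y_+\cup Z_{-,2}$ along $F_{\circ,1}$, and then—after checking that the amalgamated-genus bound is still satisfied for the \emph{new} surface $F'_{\circ,2}$, which requires a short computation using that $Z'_{-,1}$ is now a handlebody—to $Y'_+\cup Z'_{-,1}$ along $F'_{\circ,2}$. Your proposal does not account for this, and ``reverse the compression'' cannot substitute for it either. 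Finally, a minor inaccuracy: for $g(F)\geq 2$ the boundary-parallel alternative is vacuous, not because $Y\cong F\times I$ leads to a contradiction, but because $\partial E(K)$ is a torus and a surface of genus $\geq 2$ cannot be parallel to it; compressibility of $F$ follows directly from the definition of a $g(F)$-characteristic knot.
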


\begin{proof}
If $g(F)=0$, it follows from Lemma~\ref{sphere boundary} that $F$ bounds a closed $3$-ball outside $Y$. In other words, $Y$ is already a Fox submanifold, and no reimbedding is necessary. For $g(F) \geq 1$, we prove the proposition by induction on $g(F)$. Let $Z:=\overline{M-Y}$ so that $F=\partial Y=\partial Z$. For the base case, suppose $g(F)=1$. Then, $t(K) \geq g(M;F)=g^t(Y)+g^t(Z)-g(F) \geq 1+g(Z)-1=g(Z)$, and the statement follows from Lemma~\ref{torus boundary 1}.

Now, for the induction, let $g>1$ and suppose that the statement holds when $1 \leq g(F) < g$; we aim to show that the statement also holds when $g(F)=g$. By assumption, there exists a knot $K \subset Y$ which is $g(F)$-characteristic in $M$ with $t(K) \geq g(M;F)$. Since $g(F)=g>1$, $F$ must be compressible in $E(K)$. We consider the following four cases separately.

\begin{case} 
\emph{There is a compression disk $D \subset Z$ for $F$, such that its boundary $\partial D$ does not separate $F$.}
\vspace{2mm}

Let $Y_+$ be the manifold obtained from $Y$ by adding a $2$-handle $Q=\overline{N}(D,Z)$. Let $Z_-:=\overline{M-Y_+}$ and $F_\circ:=\partial Y_+=\partial Z_-$. Since $F_\circ$ is the surface obtained by compressing $F$ along $D$, $F_\circ$ is a connected surface with $g(F_\circ)=g(F)-1=g-1 \geq 1$.

We already know that the knot $K \subset Y_+$ is $(g-1)$-characteristic in $M$. Also, by Lemma~\ref{compressed AHG}, $g(M;F_\circ) \leq g(M;F) \leq t(K)$. Thus, invoking the induction hypothesis, there exists a reimbedding of $Y_+$, which restricts to the identity map on $K \subset Y_+$, with the image $Y'_+ \subset M$ so that $Z'_-:=\overline{M-Y'_+}$ is a handlebody. Note that we have a decomposition $Y'_+=Y' \cup Q'$ where $Y' \supset K$ and $Q'$ are the images of $Y \supset K$ and $Q$ respectively. Moreover, $\overline{M-Y'}=Z'_- \cup Q'$ is a handlebody, since $Q'$ is added to the handlebody $Z'_-$ as a $1$-handle. One can now easily obtain a reimbedding of $Y$ onto $Y' \cong Y$ with the desired properties.
\end{case}

\begin{figure}[t]
\label{fig:case-1}
\begin{center} \includegraphics[height=74mm, width=94mm]{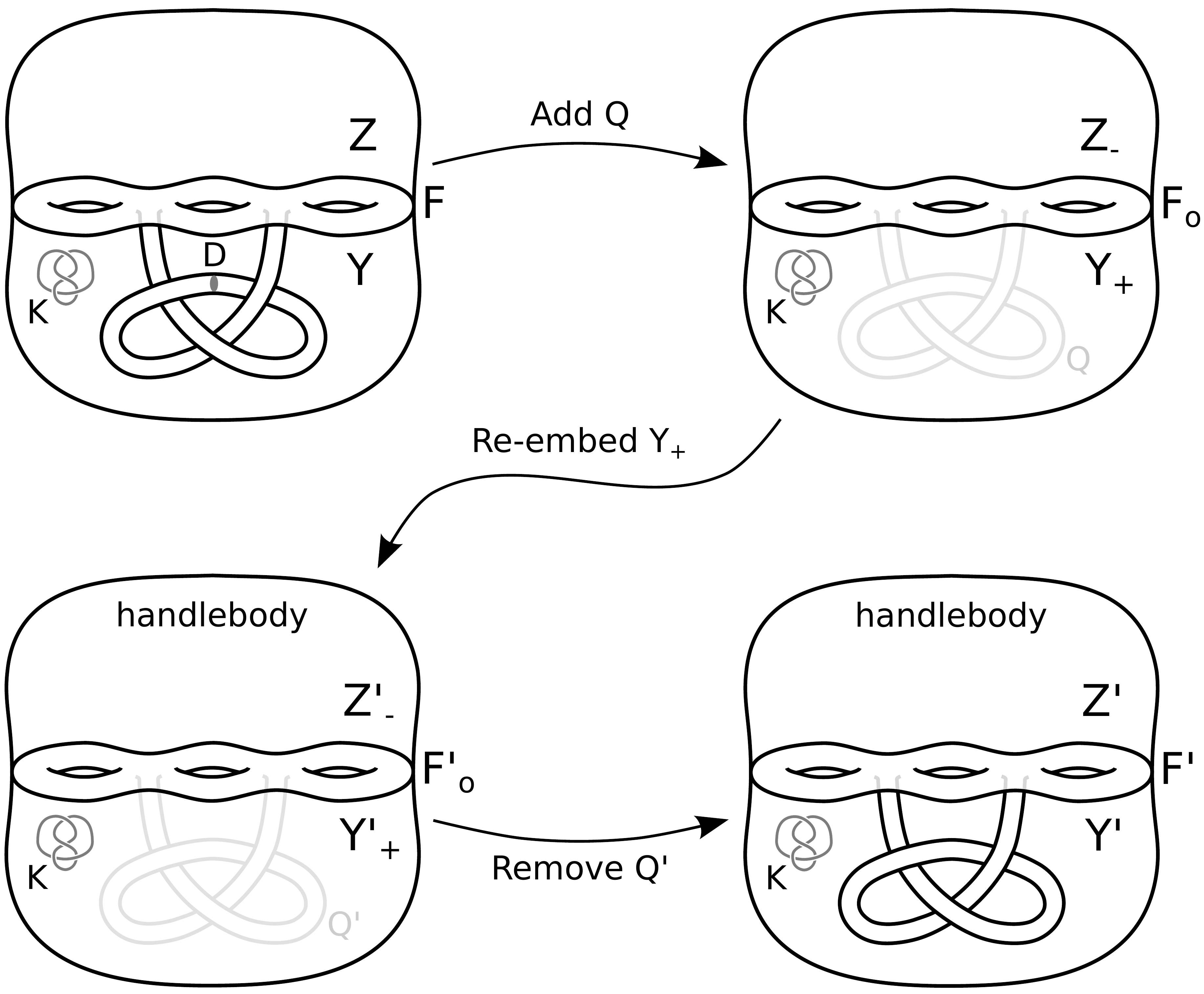} \end{center}
\caption{Schematic pictures for Case 1.}
\end{figure}

\begin{figure}[b]
\label{fig:case-2}
\begin{center} \includegraphics[height=74mm, width=94mm]{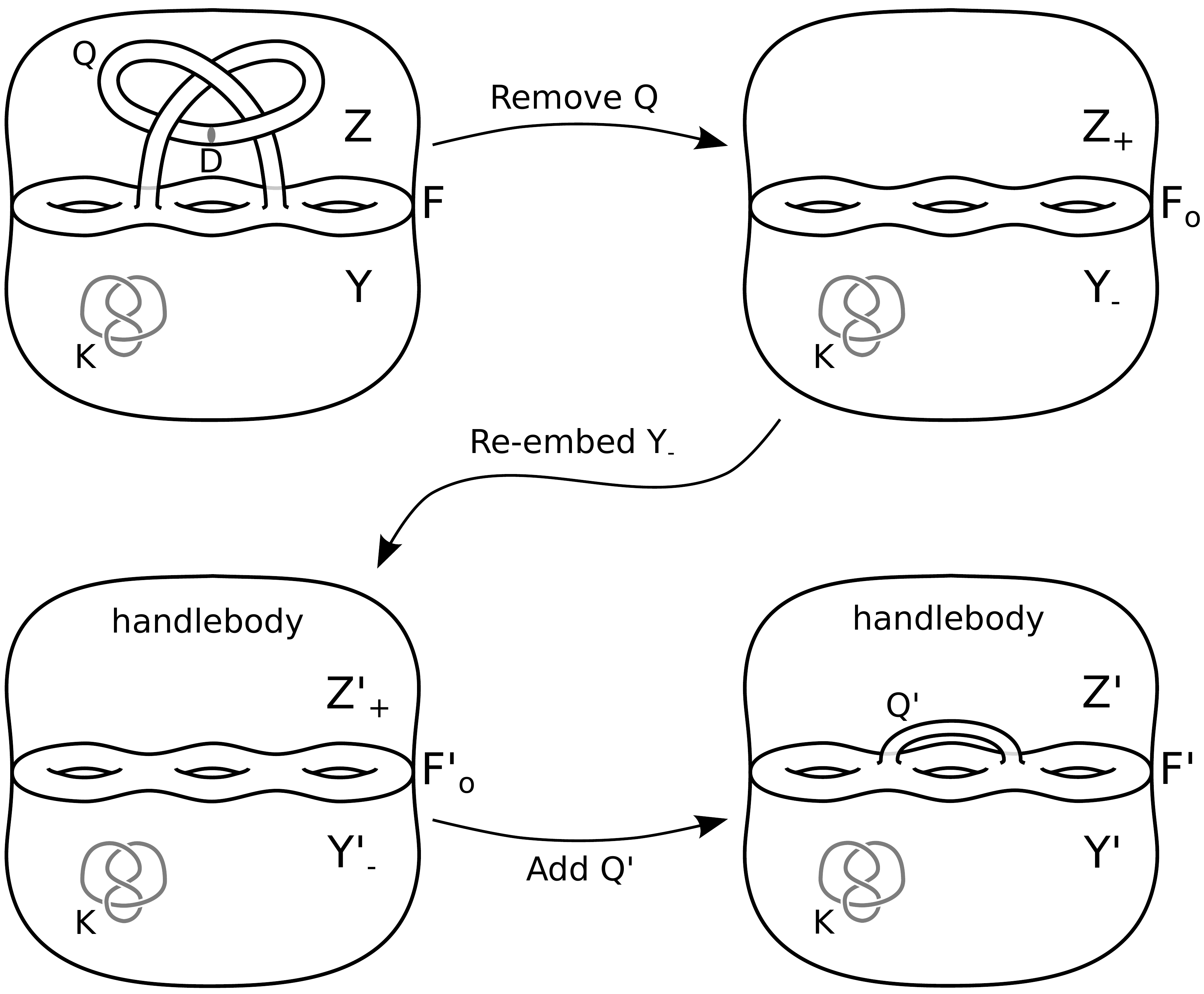} \end{center}
\caption{Schematic pictures for Case 2.}
\end{figure}

\begin{case} 
\emph{There is a compression disk $D \subset E(K) \cap Y$ for $F$, such that its boundary $\partial D$ does not separate $F$.}
\vspace{2mm}

Let $Z_+$ be the manifold obtained from $Z$ by adding a $2$-handle $Q=\overline{N}\big(D, E(K) \cap Y \big)$. Let $Y_-:=\overline{M-Z_+}$ and $F_\circ:=\partial Y_-=\partial Z_+$. By the argument analogous to Case 1 above, we can invoke the induction hypothesis and obtain a reimbedding of $Y_-$, which restricts to the identity map on $K \subset Y_-$, with the image $Y'_- \subset M$ so that $Z'_+:=\overline{M-Y'_-}$ is a handlebody.

Now, let $\alpha$ be a properly embedded $\partial$-parallel arc in $Z'_+$. Then, $Y':=Y'_- \cup \overline{N}(\alpha,Z'_+) \supset K$ is a homeomorphic copy of $Y$, and $\overline{M-Y'}=Z'_+-N(\alpha,Z'_+)$ is a handlebody. One can now easily obtain a reimbedding of $Y$ onto $Y' \cong Y$ with the desired properties.
\end{case}

\begin{case} 
\emph{There is a compression disk $D \subset Z$ for $F$, such that its boundary $\partial D$ separates $F$.}
\par \vspace{2mm}

Let $Y_+$ be the manifold obtained by adding a $2$-handle $Q=\overline{N}(D,Z)$ along the neighborhood of a separating essential loop $\partial D$ on $F$. Let $Z_-:=\overline{M-Y_+}$ and $F_\circ:=\partial Y_+=\partial Z_-$. Since $F_\circ$ is the surface obtained by compressing $\partial Y$ along $D$, $F_\circ$ is a two-component surface, say $F_\circ=F_{\circ,1} \sqcup F_{\circ,2}$, with $g(F_{\circ,1})+g(F_{\circ,2})=g(F)$ and $g(F) > g(F_{\circ,i}) \geq 1$. Note that, by Lemma~\ref{separating}, each $F_{\circ,i}$ is a separating surface. Thus, one sees that $D$ must have been a separating disk for $Z$, and that $Z_-$ consists of two components. Write these components as $Z_{-,1}$ and $Z_{-,2}$ so that $\partial Z_{-,i}=F_{\circ,i}$.

Let $Y_{+,1}:=Y_+ \cup_{F_{\circ,2}} Z_{-,2}$, so that $\partial Y_{+,1}=F_{\circ,1}=\partial Z_{-,1}$. We already know that the knot $K \subset Y_+ \subset Y_{+,1}$ is $g(F_{\circ,1})$-characteristic. Also, by Lemma~\ref{multi to single} and Lemma~\ref{compressed AHG}, $g(M;F_{\circ,1}) \leq g(M;F_\circ) \leq g(M;F) \leq t(K)$. Thus, invoking the induction hypothesis, there exists a reimbedding of $Y_{+,1}$, restricting to the identity map on $K \subset Y_{+,1}$, with the image $Y'_{+,1} \subset M$ so that $Z'_{-,1}:=\overline{M-Y'_{+,1}}$ is a handlebody. Let us write $F'_{\circ,1}$ for the image of $F_{\circ,1}$ so that $\partial Y'_{+,1}=F'_{\circ,1}=\partial Z'_{-,1}$. Note that we have a decomposition $Y'_{+,1}=Y'_+ \cup_{F'_{\circ,2}} Z'_{-,2}$, where $Y'_+$, $F'_{\circ,2}$, and $Z'_{-,2}$ are the images of $Y_+$, $F_{\circ,2}$, and $Z_{-,2}$ respectively. We also note that $K \subset Y'_+$ since $K \subset Y_+$ before the reimbedding.

\begin{figure}[ht]
\label{fig:case-3}
\begin{center} \includegraphics[height=129mm, width=114mm]{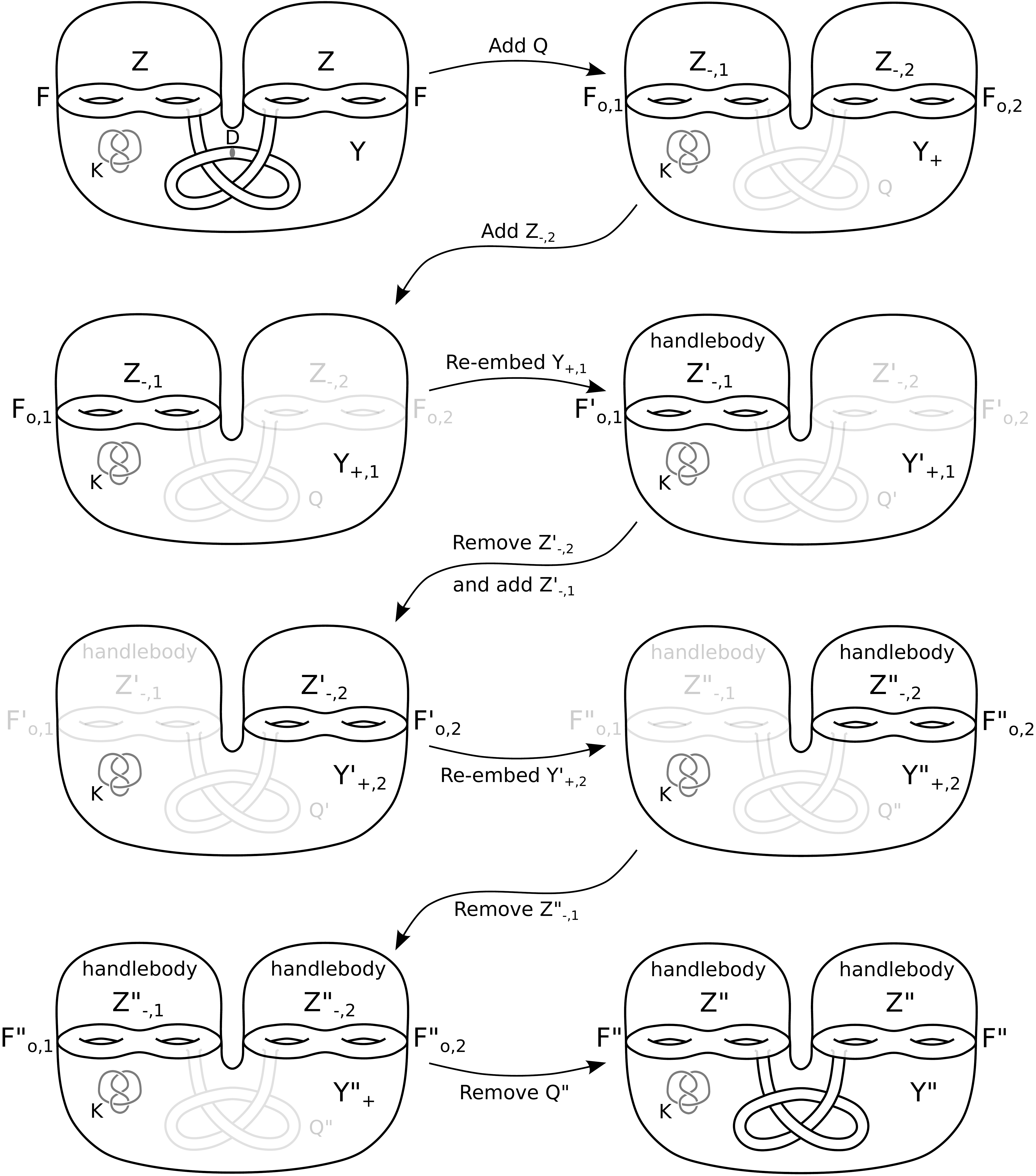} \end{center}
\caption{Schematic pictures for Case 3.}
\end{figure}

Let $Y'_{+,2}:=Y'_+ \cup_{F'_{\circ,1}} Z'_{-,1}$ so that $\partial Y'_{+,2}=F'_{\circ,2}=\partial Z'_{-,2}$. In order to repeat the argument in the last paragraph, let us first verify the necessary inequality to invoke the induction hypothesis. Writing $F'_\circ=F'_{\circ,1} \sqcup F'_{\circ,2}$, we have $F'_\circ \cong F_\circ$ and thus $g(F'_\circ)=g(F_\circ)$. Also, since $Y'_+ \cong Y_+$ and $Z'_{-,2} \cong Z_{-,2}$, we have $g^t(Y'_+)=g^t(Y_+)$ and $g^t(Z'_{-,2})=g^t(Z_{-,2})$. On the other hand, since $Z'_{-,1}$ is a handlebody, $g^t(Z'_{-,1})=g(F'_{\circ,1})=g(F_{\circ,1}) \leq g^t(Z_{-,1})$. Thus, we have
\begin{eqnarray*}
g(M;F'_\circ) &=& g^t(Y'_+)+g^t(Z'_-)-g(F'_\circ) \\
&=& g^t(Y'_+)+g^t(Z'_{-,1})+g^t(Z'_{-,2})-g(F'_\circ) \\
&\leq& g^t(Y_+)+g^t(Z_{-,1})+g^t(Z_{-,2})-g(F_\circ) \\
&=& g^t(Y_+)+g^t(Z_-)-g(F_\circ) \\
&=& g(M;F_\circ).
\end{eqnarray*}
Together with Lemma~\ref{multi to single} and Lemma~\ref{compressed AHG}, we now obtain
\[
g(M;F'_{\circ,2}) \leq g(M;F'_\circ) \leq g(M;F_\circ) \leq g(M;F) \leq t(K),
\]
which is the desired inequality for the induction.

Of course, $K \subset Y'_+ \subset Y'_{+,2}$ is $g(F'_{\circ,2})$-characteristic, since $K$ is $g(F)$-characteristic by assumption and $g(F'_{\circ,2})=g(F_{\circ,2}) \leq g(F)$. Thus, invoking the induction hypothesis, there exists a reimbedding of $Y'_{+,2}$, restricting to the identity map on $K \subset Y'_{+,2}$, with the image $Y''_{+,2}$ so that $Z''_{-,2}:=\overline{M-Y''_{+,2}}$ is a handlebody. Note that we have a decomposition $Y''_{+,2}=Y''_+ \cup_{F''_{\circ,1}} Z''_{-,1}$, where $Y''_+$, $F''_{\circ,1}$, and $Z''_{-,1}$ are the images of $Y'_+$, $F'_{\circ,1}$, and $Z'_{-,1}$ respectively. In particular, we have a decomposition $M=Y''_+ \cup Z''_{-,1} \cup Z''_{-,2}$, where each $Z''_{-,i}$ is a handlebody. We also have a decomposition $Y''_+=Y'' \cup Q''$ where $Y'' \supset K$ and $Q''$ are images of $Y \supset K$ and $Q$ respectively under the composition of the two reimbeddings. The cocore of the $2$-handle $Q''$ is an arc connecting $F''_{\circ,1}$ and $F''_{\circ,2}$; thus, $Q''$ can be regarded as a $1$-handle that connects handlebodies $Z''_{-,1}$ and $Z''_{-,2}$. It follows that $\overline{M-Y''}=Z''_{-,1} \cup Q'' \cup Z''_{-,2}$ is a handlebody. One can now easily obtain a reimbedding of $Y$ onto $Y'' \cong Y$ with the desired properties.
\end{case}

\begin{case} 
\emph{Every compression disk $D \subset E(K)$ for $F$ is contained in $E(K) \cap Y$ and its boundary $\partial D$ separates $F$.}
\par \vspace{2mm}
We compress $F$ into $E(K) \cap Y$ along a compression disk, and continue compressing the resulted surface, if possible, into the remnant of $E(K) \cap Y$ along one disk at a time. Isotoping the compression disk in each step if necessary, we may assume that the boundary of each compression disk misses the pairs of disks from previous compressions and hence lies in the original $F$. Then, it follows that each disk in this process is indeed a compression disk for the original $F$ into $E(K) \cap Y$. The sequence of such compressions must stop, since the maximal number of $2$-surgeries that we can apply to $F$ along separating loops is at most $g(F)-1$. Let us write $D_1, \cdots, D_n$ for these compression disks, and let $\bfD:=D_1 \sqcup \cdots \sqcup D_n\subset E(K) \cap Y$. 

Take $\bfQ:=\overline{N}\big(\bfD,E(K) \cap Y\big)$, and let $Z_+:=Z \cup \bfQ$, $Y_-:=\overline{M-Z_+}$, and $F_\circ:=\partial Y_-=\partial Z_+$. By construction, $F_\circ$ is the surface obtained from $F$ by compression along $\bfD$; by construction, it admits no further compression into the remnant of $E(K) \cap Y$, and it has no $S^2$-component. It also follows that $F_\circ$ consists of $n+1$ components, say $F_\circ=F_{\circ,1} \sqcup \cdots \sqcup F_{\circ,n+1}$ with $\sum_{i=1}^{n+1} g(F_{\circ,i})=g(F)$ and $g(F) > g(F_{\circ,i}) \geq 1$. Note that, by Lemma~\ref{separating}, each $F_{\circ,i}$ is a separating surface. Thus, one sees that each disk in $\bfD$ must have been a separating disk for $Y$, and that $Y_-$ consists of $n+1$ components. Let $Y_-=Y_{-,1} \sqcup \cdots \sqcup Y_{-,n+1}$ so that $\partial Y_{-,i}=F_{\circ,i}$. We remark that $Y$ is the boundary connect sum of components of $Y_-$.

\begin{claim*}
$F_\circ$ is compressible into $Z_+$.
\end{claim*}

\begin{proof}[Proof of Claim]
First, suppose that $F_\circ$ consists of $\partial$-parallel tori in $E(K)$. Note that $Z$ must be connected, since $M$ is connected and $Z \subset M$ is a submanifold with connected boundary $\partial Z=F$; hence, $Z_+$ is also connected. Then, together with the assumption that $F_\circ=\partial Z_+$ consists of $\partial$-parallel tori, it follows that $Z_+$ must be a parallelism, i.e. $Z_+ \cong T^2 \times I$. Let $Y_-=Y_{-,1} \sqcup Y_{-,2}$ with $K \subset Y_{-,1}$. Note that $Y_{-,1} \cup Z_+$ is a solid tori with the knot $K$ as its core. Thus, we have $Y_{-,2} \cong E(K)$, and hence
\[
t(K)=g^t\big(E(K)\big)-1=g^t(Y_{-,2})-1 < g^t(Y_{-,2}).
\]
On the other hand, we have
\begin{eqnarray*}
t(K) & \geq & g(M;F) \\
& \geq & g(M;F_\circ) \\
&=&g^t(Y_-)+g^t(Z_+)-g(F_\circ) \\
&=&g^t(Y_{-,1})+g^t(Y_{-,2})+g^t(Z_+)-g(F_{\circ,1})-g(F_{\circ,2}) \\
&=&1+g^t(Y_{-,2})+2-1-1 \\
&=&g^t(Y_{-,2})+1 \\
&>&g^t(Y_{-,2}),
\end{eqnarray*}
where the first inequality is an assumption of the theorem and the second inequality follows from Lemma~\ref{compressed AHG}. The two inequalities above clearly contradicts.

Hence, we may assume that there is a component of $F_\circ$ which is not a $\partial$-parallel torus in $E(K)$ and hence compressible in $E(K)$. We can find a compression disk for $F_\circ$ by the standard argument as follows. Let $D$ be a compression disk for a compressible component of $F_\circ$. If $D \cap F_\circ$ contains trivial loops in $F_\circ$, we may surger $D$ along the disks bounded by such loops and obtain a new compression disk which intersects $F_\circ$ only in essential loops; so, we may as well assume that $D \cap F_\circ$ consists of essential loops in $F_\circ$. We may also assume that $D \cap F_\circ=\partial D$; if not, replace $D$ with a subdisk of $D$ bounded by an innermost component of $D \cap F$ in $D$. With these two assumptions, $D$ is indeed a compression disk for $F_\circ$. Finally, since $F_\circ$ is not compressible into $Y_-$ by construction, $F_\circ$ is compressible into $Z_+$.
\end{proof}

Let $D$ be a compression disk of $F_\circ$ into $Z_+$, as in the above claim. If we reconnect components of $Y_-$ by 1-handles that avoids $D$, we obtain a submanifold $Y' \cong Y$, i.e. a reimbedding of $Y$, such that this disk $D$ is a compression disk for $\partial Y'$ lying outside $Y'$. We will show that, if we choose these 1-handles carefully, we can retain the necessary condition on the amalgamated Heegaard genus and apply Case 1 or Case 3 to $Y'$.

Let $Z=V \cup_S W$ be a minimal genus Heegaard splitting with $\partial_-V=\partial Z$ and $\partial_- W=\nil$, i.e. $W$ is a handlebody. Then, $V_+:=V \cup \bfQ$ is a compression body with $\partial_-V_+=\partial Z_+$ and $\partial_+V_+=\partial_+ V=S=\partial W$. It follows that $Z_+=V_+ \cup_S W$ is a tunnel-type Heegaard splitting of $Z_+$.

By the version of Haken's Lemma due to Casson and Gordon (c.f. Theorem~\ref{HCGHS}), we can isotope $D$ so that $D \cap S$ is a simple closed essential curve in $S$ and find a minimal disk system $\bfE$ for $V_+$ such that $\bfE \cap D=\nil$. Note that the dual spine $\bfA$ of $\bfE$ is a minimal spine consisting of properly embedded arcs in $V_+$, and it is also disjoint from $D$.

\begin{figure}[h]
\label{fig:case-4}
\begin{center} \includegraphics[height=62mm, width=124mm]{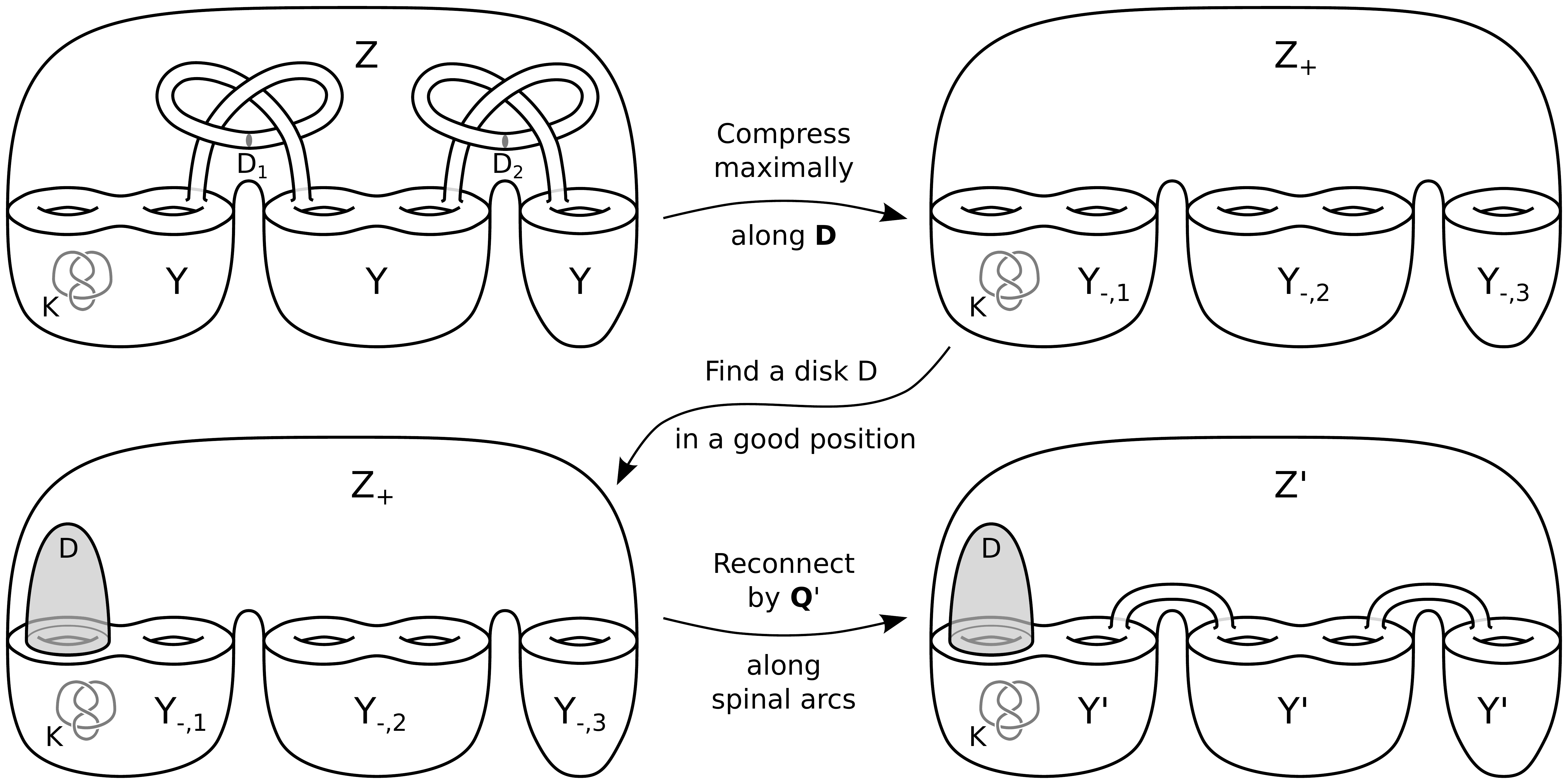} \end{center}
\caption{Schematic pictures for Case 4.}
\end{figure}

One can choose a subcollection $\bfA'$ of the spine $\bfA$, such that (i) $\bfA' \cup \partial_-V_+$ is connected, and (ii) the number of arcs in $\bfA'$ is minimal among subcollections that satisfy (i). Since $\partial_-V_+$ consists of $n+1$ components, one can see that $\bfA'$ consists of precisely $n$ arcs. Now, take $\bfQ':=\overline{N}(\bfA,V_+-D)$, and let $Y':=Y_- \cup \bfQ'$. Also, let $Z':=\overline{M-Y'}$ and $F':=\partial Y'=\partial Z'$.

$Y'$ is obtained from $Y_-$ by re-connecting the components of $Y_-$ by $1$-handles $\bfQ'$ along the arcs $\bfA'$. It follows from the choice of $\bfA'$ that $Y'$ is indeed the boundary connect sum of components of $Y_-$; thus, $Y'$ is homeomorphic to $Y$. Moreover, the compression disk $D$ can now be regarded as a compression disk for $F'$ into $Z'$ as desired. Note that one can easily construct the reimbedding of $Y$ with image $Y'$ so that it restricts to the identity map on $K$.

We observe that $Z'$ can be decomposed as $Z'=V' \cup_S W$, where $V':=\overline{V_+ -\bfQ'}$. Since $V'$ is obtained from the compression body $V_+$ by drilling out $\bfQ'$ along the subcollection of spinal arcs, we see that $V'$ is again a compression body with $\partial_+V'=\partial_+V_+=S=\partial W$. In other words, the decomposition $Z'=V' \cup_S W$ is indeed a Heegaard splitting for $Z'$, and we have $g^t(Z') \leq g(S) =g^t(Z)$. Hence, together with $g^t(Y')=g^t(Y)$ and $g(F')=g(F)$, we obtain
\[
\begin{array}{rcccccl}
g(M;F') &=& g^t(Y')+g^t(Z')-g(F') &&&&\\
&\leq& g^t(Y)+g^t(Z)-g(F) &=& g(M;F) &\leq& t(K).
\end{array}
\]
Since $F'=\partial Y'$ compresses into $Z'=\overline{M-Y'}$ along $D$, we can now apply Case 1 or Case 3 to $Y'$ and obtain the desired reimbedding of $Y$.
\end{case}
\noindent These four cases together complete the proof of Proposition~\ref{single boundary}.
\end{proof}

\subsection{Disconnected Boundary}
In Proposition~\ref{single boundary}, the submanifold $Y$ has a single boundary component. We now give the analogous reimbedding statement for the case where $Y$ has a multiple boundary components.

\begin{prop} \label{multiple boundary}
Let $M$ be a closed connected $3$-manifold and $Y$ be a connected $3$-submanifold of $M$ with non-empty boundary $F:=\partial Y$. If there exists a knot $K \subset Y$ which is $g_{\max}(F)$-characteristic in $M$ with $t(K) \geq g(M;F)$, then there exists a Fox reimbedding of $Y$ into $M$, restricting to the identity map on $K \subset Y$.
\end{prop}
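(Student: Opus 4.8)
The plan is to reduce to Proposition~\ref{single boundary} by treating the boundary components of $Y$ one at a time. Since $K$ is $g_{\max}(F)$-characteristic and $g(F_i)\le g_{\max}(F)$ for every component $F_i$ of $F$, Lemma~\ref{separating} shows that each $F_i$ separates $M$; as $Y$ is connected, $F_i$ bounds a component $Z_i$ of $Z:=\overline{M-Y}$, so that $Z=Z_1\sqcup\cdots\sqcup Z_n$ with $\partial Z_i=F_i$. What we want is a reimbedding of $Y$, fixing $K$, after which every $Z_i$ is a handlebody. One might attempt to merge the components of $F$ by tubes so as to invoke the connected-boundary case directly, but tubing $F_1$ to $F_2$ produces a boundary component of genus $g(F_1)+g(F_2)$, possibly larger than $g_{\max}(F)$, where $K$ need no longer be characteristic; instead the relevant object is $\widehat{Y}_i:=\overline{M-Z_i}=Y\cup\bigcup_{j\ne i}Z_j$, a connected submanifold with \emph{connected} boundary $F_i$.

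I would argue by induction on the number $r$ of the pieces $Z_1,\dots,Z_n$ that are not handlebodies. If $r=0$, then $Y$ is already a Fox submanifold and the identity reimbedding suffices. If $r\ge 1$, pick $i$ with $Z_i$ not a handlebody. Then $K\subset Y\subset\widehat{Y}_i$, the knot $K$ is $g(F_i)$-characteristic because $g(F_i)\le g_{\max}(F)$, and Lemma~\ref{multi to single} gives $g(M;F_i)\le g(M;F)\le t(K)$; hence Proposition~\ref{single boundary} applies to $\widehat{Y}_i$ and produces a reimbedding $\phi$ of $\widehat{Y}_i$ into $M$, restricting to the identity on $K$, whose image has handlebody exterior (connected since $\partial\widehat{Y}_i$ is connected). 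Writing $Y':=\phi(Y)\cong Y$, $Z_j':=\phi(Z_j)\cong Z_j$ for $j\ne i$, $F_j':=\phi(F_j)$, and $Z_i':=\overline{M-\phi(\widehat{Y}_i)}$ (a handlebody), we obtain a reimbedding of $Y$ onto $Y'$ with $\overline{M-Y'}=Z_1'\sqcup\cdots\sqcup Z_n'$ and $\partial Z_j'=F_j'\cong F_j$.

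It remains to check that this new configuration again satisfies the hypotheses, with $r$ strictly smaller. Since $\phi$ is a homeomorphism, each $Z_j'\cong Z_j$ $(j\ne i)$ is a handlebody whenever $Z_j$ was; together with $Z_i'$ being a handlebody and $Z_i$ not, the number of non-handlebody pieces of $\overline{M-Y'}$ equals $r-1$. The knot $K$ has not moved, so it is still $g_{\max}(\partial Y')$-characteristic in $M$, and $g_{\max}(\partial Y')=g_{\max}(F)$ because $F_j'\cong F_j$. Finally $g^t(Z_i')=g(F_i)$ since $Z_i'$ is a handlebody, while $g^t(Z_i)\ge g(F_i)$ since $t(Z_i)=g^t(Z_i)-g(F_i)\ge 0$; as each component of $\partial Y'$ separates $M$, the amalgamation formula yields
\[
g(M;\partial Y')=g^t(Y)+\sum_{j=1}^{n}g^t(Z_j')-g(F)\le g^t(Y)+\sum_{j=1}^{n}g^t(Z_j)-g(F)=g(M;F)\le t(K),
\]
so $t(K)\ge g(M;\partial Y')$. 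The induction hypothesis then furnishes a Fox reimbedding $\Psi$ of $Y'$ fixing $K$, and $\Psi\circ\phi|_Y$ is the desired Fox reimbedding of $Y$.

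Given Proposition~\ref{single boundary}, the argument involves no new geometric input. The step I expect to require the most care is verifying that applying Proposition~\ref{single boundary} at one boundary component does not destroy the handlebody structure already arranged at the others, and that the bound $t(K)\ge g(M;\partial Y)$ survives each reimbedding. Both are handled by the two facts used above: reimbeddings are homeomorphisms, so previously produced handlebodies remain handlebodies; and replacing a complementary piece by a handlebody can only decrease the amalgamated Heegaard genus. Keeping track of which pieces have been converted and re-checking the hypotheses of Proposition~\ref{single boundary} at each stage is then the only bookkeeping involved.
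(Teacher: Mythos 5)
Your proof is correct, and it reduces to Proposition~\ref{single boundary} in much the same spirit as the paper, but with a genuinely different inductive structure. The paper inducts on the number of components $|F|$: it first fills in $Z_2,\dots,Z_m$ and applies the $|F|=1$ case to $\widehat{Y}_1:=Y\cup Z_2\cup\cdots\cup Z_m$ to make the first complementary piece a handlebody, and then \emph{absorbs} that handlebody into $Y'$ to form $Y'_0:=Y'\cup_{F'_1}Z'_1$, whose boundary has $m-1$ components so the induction hypothesis applies; the final reimbedding is extracted by peeling $Z''_1$ back off. You instead induct on the number $r$ of non-handlebody complementary pieces, keeping $\partial Y$ intact at every stage: you select a bad piece $Z_i$, apply Proposition~\ref{single boundary} to $\widehat{Y}_i$, and then note that the number of bad pieces has dropped by one while all hypotheses persist. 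Both routes rest on the same two verifications: Lemma~\ref{multi to single} to pass from $g(M;F)$ to $g(M;F_i)$, and the observation that replacing a complementary piece $Z_i$ by a handlebody of the same boundary genus cannot increase the amalgamated Heegaard genus (since $g^t(Z'_i)=g(F_i)\le g^t(Z_i)$), so that $t(K)\ge g(M;\partial Y')$ survives each reimbedding. Your variant avoids the absorb-and-peel bookkeeping, and your remark explaining why one should not simply tube the boundary components together (it would raise $g_{\max}$ past what the characteristic knot controls) correctly identifies the key constraint driving both arguments.
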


\begin{proof}
We prove the statement by induction on the number of components $|F|$ of $F$. When $|F|=1$, i.e. $F$ is connected, the statement coincides with Proposition~\ref{single boundary}. For the induction, let $m > 1$ and suppose that the statement holds when $1 \leq |F| < m$; we aim to show that the statement also holds for the case $|F|=m$.

Let us write $F_i$ for each component of $F$ so that $F:=F_1 \sqcup \cdots \sqcup F_m$, and let $Z:=\overline{M-Y}$ as before. Since $K \subset Y$ is $g_{\max}(F)$-characteristic with $g_{\max}(F) \geq g(F_i)$, we know from Lemma~\ref{separating} that each $F_i$ must be a separating surface. It follows that each $F_i$ bounds a component of $Z$ outside $Y$. Let $Z=Z_1 \sqcup \cdots \sqcup Z_m$ with $\partial Z_i=F_i$.

Take $Y_1:=Y \cup Z_2 \cup \cdots \cup Z_m$, so that $\partial Y_1=\partial Z_1=F_1$. By the argument analogous to Case 3 in the proof of Proposition~\ref{single boundary}, we can invoke induction hypothesis to obtain a reimbedding of $Y_1$ onto $Y'_1$, restricting to the identity on $K$, so that the image $F'_1$ of $F_1$ bounds a handlebody $Z'_1$ outside $Y'_1$. Note that $Y'_1$ decomposes as $Y'_1:=Y' \cup Z'_2 \cup \cdots \cup Z'_m$, where $Y', Z'_2, \cdots, Z'_m$ are the images of $Y, Z_2, \cdots, Z_m$ respectively. For $i=2, \cdots, m$, the image $F'_i$ of $F_i$ bounds $Z'_i$ outside $Y'$.

Now, take $Y'_0:=Y' \cup_{F'_1} Z'_1$, $\partial Y'_0=\bigsqcup_{i=2}^m F'_i$. Again by the argument analogous to Case 3 in the the proof of Proposition~\ref{single boundary}, one can invoke the induction hypothesis to obtain a reimbedding of $Y'_0$ onto $Y''_0$, restricting to the identity on $K$, so that the image $F''_i$ of $F'_i$ bounds a handlebody $Z''_i$ for $i=2, \cdots, m$. Note that $Y''_0$ decomposes as $Y''_0=Y'' \cup Z''_1$, where $Y''$ and $Z''_1$ are the images of $Y'$ and $Z'_1$ respectively. The image $F''_1$ of $F'_1$ also bounds the handlebody $Z''_1$.

Thus, composing the two reimbeddings, we have a reimbedding of $Y$ onto $Y''$, restricting to the identity on $K$, so that each component $F''_i$ of $F''=\partial Y''$ bounds a handlebody.
\end{proof}

\section{Main Theorem and Examples} \label{Main Theorem and Examples}

We can now complete the proof of our main theorem that relates Fox submanifolds and Bing submanifolds of a closed connected $3$-manifold. We also collect some special cases, which include some known results and a few new corollaries.

\subsection{Main Theorem}

Most of the work is already done in the last section, and the theorem is an immediate consequence of Proposition~\ref{multiple boundary}.

\begin{thm4}
For any closed connected $3$-manifold $M$, every Bing submanifold of $M$ admits a Fox reimbedding into $M$; hence, a compact connected 3-manifold can be embedded in $M$ as a Fox submanifold if and only if it can be embedded in $M$ as a Bing submanifold.
\end{thm4}

\begin{proof}
Let $Y$ be a Bing submanifold of a closed connected $3$-manifold $M$. By Theorem~\ref{existence-char}, there exists a $g_{\max}(\partial Y)$-characteristic knot $K \subset M$ with $t(K) \geq g(M;\partial Y)$. Since $Y$ is a Bing submanifold of $M$, we may isotope $K$ into $Y$; thus, we may as well assume $K \subset Y$. By Proposition~\ref{multiple boundary}, we can reimbed $Y$ as a Fox submanifold $Y' \cong Y$.
\end{proof}

\subsection{Special Cases}

It is worthwhile to record the following examples, which show that some of the theorems mentioned in \S\ref{Introduction} are indeed special cases of Theorem~\ref{main theorem}.

\begin{exa}
Suppose $M \cong S^3$. It is easy to see that every submanifold of $S^3$ is a Bing submanifold. Thus, our Theorem~\ref{main theorem} specializes to the original Fox reimbedding theorem \cite{Fox:Reimbedding}, which we cited as Theorem~\ref{Fox} in \S\ref{Introduction}.
\end{exa}

\begin{exa}
Suppose $Y \cong B^3$. Since every embedding of a closed $3$-ball into a connected $3$-manifold is isotopic to each other, Theorem~\ref{main theorem} says that the closure of the complement of $Y$ is a single closed $3$-ball as well. In other words, $M \cong S^3$. This is essentially Bing's characterization of $S^3$ \cite{Bing:S3}, which we cited as Theorem~\ref{Bing} in \S\ref{Introduction}.
\end{exa}

\begin{exa}
Suppose $Y$ is a genus $g$ handlebody. Theorem~\ref{main theorem} says that there is another genus $g$ handlebody, say $Y'$ such that the closure of the complement $Y'$ is a handlebody of genus $g$ as well. In other words, $M=Y' \cup \overline{M-Y'}$ is a Heegaard splitting of $M$. This is essentially the main results of \cite{Hass--Thompson:Bing} and \cite{Kobayashi--Nishi:Bing}, since the other direction of these theorems is again trivial.
\end{exa}

The latter two examples together provide the characterization of closed connected $3$-manifolds that admit genus $g$ splittings for each $g \geq 0$. We can also give the following alternative characterization of such manifolds as an immediate consequence of Theorem~\ref{main theorem}.

\begin{coro} \label{Heeg}
A closed connected $3$-manifold $M$ admits a genus $g$ Heegaard splitting if and only if there exists a closed orientable surface $F$ in $M$ with genus $g$ such that every knot in $M$ can be isotoped to lie within a regular neighborhood of $F$.
\end{coro}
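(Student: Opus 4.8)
The plan is to derive Corollary~\ref{Heeg} from Theorem~\ref{main theorem} by taking $Y=\overline{N}(F,M)$ and carefully checking that the two conditions match up. The ``only if'' direction is essentially routine: if $M=V\cup_S W$ is a genus $g$ Heegaard splitting, take $F=S$; then $\overline{N}(S,M)\cong S\times I$, and any knot $K$ in $M$ can be isotoped into $\overline{N}(S,M)$. Indeed, after isotoping $K$ into bridge position with respect to $S$ one can push the $\partial$-parallel bridge arcs in $V$ and in $W$ into a collar of $S$; alternatively one cites that a Heegaard surface is a Bing submanifold in the obvious way, since $S\times I$ deformation retracts onto $S$ and contains a spine-dual picture of both handlebodies. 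I would spell this out in a sentence or two.

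For the ``if'' direction, suppose there is a closed orientable genus $g$ surface $F\subset M$ such that every knot can be isotoped into $Y:=\overline{N}(F,M)$. If $F$ is separating then $Y\cong F\times I$ is a connected submanifold and it is by hypothesis a Bing submanifold; Theorem~\ref{main theorem} then produces a Fox reimbedding $Y'\cong F\times I$ of $Y$, so $M=Y'\cup E(Y',M)$ with $E(Y',M)$ a union of handlebodies. But $\partial Y'$ has two components $F'_0, F'_1$, each of genus $g$, and $E(Y',M)$ has one handlebody glued to each; since $Y'\cong F\times I$ is itself (a collar, hence) can be absorbed, $M$ is obtained by gluing two genus $g$ handlebodies along $F\times I$, which is a genus $g$ Heegaard splitting. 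If $F$ is non-separating, then $Y=\overline{N}(F,M)$ is still connected but now $\partial Y$ is a single genus $2g$ surface (two parallel copies of $F$ tubed together, i.e. $Y$ is an $F$-bundle chunk; more precisely $\overline{N}(F,M)\cong F\,\widetilde\times\,I$ has connected boundary the double cover of $F$ along the orientation class — for orientable $M$ and orientable $F$ this is just two copies of $F$, so $\partial Y = F\sqcup F$, but they need not bound separately). I would handle this by noting that in all cases $Y$ is a connected Bing submanifold, apply Theorem~\ref{main theorem} to get a Fox reimbedding $Y'$, and then argue that $M=Y'\cup(\text{handlebodies})$ collapses to a Heegaard splitting whose surface has genus $g$: the key point is that $Y'\cong\overline{N}(F,M)$ retracts to a genus $g$ surface, and attaching handlebodies along its boundary and collapsing the collar yields a splitting of genus exactly $g(F)=g$.

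The main obstacle I anticipate is bookkeeping of the genus and of the number of boundary components of $Y=\overline{N}(F,M)$ in the non-separating case, and making sure the resulting splitting has genus $g$ rather than something larger: after the Fox reimbedding the exterior is a union of handlebodies, possibly more than one, and one must check that collapsing $Y'\cong F\times I$ (or the twisted $I$-bundle) together with these handlebodies really produces a standard genus $g$ Heegaard splitting and not merely a generalized one. The cleanest route is to observe that $\overline{N}(F,M)$ with the handlebodies attached along its boundary, after collapsing the product/$I$-bundle collar, is by definition a Heegaard splitting with Heegaard surface a copy of $F$; this is exactly the amalgamation picture of Section~\ref{Amalgamated Heegaard Genus} run in reverse, and the genus is read off as $g(F)=g$. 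I would therefore present the argument as: (1) $Y=\overline{N}(F,M)$ is a connected Bing submanifold; (2) Theorem~\ref{main theorem} gives a Fox reimbedding $Y'$; (3) $M=Y'\cup_{\partial Y'}(\text{handlebodies})$ collapses to a genus $g(F)$ Heegaard splitting; (4) conversely a genus $g$ splitting makes its Heegaard surface's neighborhood a Bing submanifold. Steps (1) and (4) are immediate, step (2) is the cited theorem, and step (3) is the only place requiring care.
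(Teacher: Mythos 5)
Your proposal takes the same route as the paper, which proves the corollary in one line by applying Theorem~\ref{main theorem} to $Y := \overline{N}(F,M)$; your elaboration of why this ``immediately'' gives a genus-$g$ splitting is the content the paper leaves implicit, and it is essentially correct. One detour is unnecessary and briefly wrong before you self-correct: you worry that $\overline{N}(F,M)$ might be a twisted $I$-bundle with connected boundary when $F$ is non-separating, but since $M$ and $F$ are both orientable the normal $I$-bundle over $F$ is automatically trivial, so $Y \cong F \times I$ and $\partial Y$ is two genus-$g$ copies of $F$ regardless of whether $F$ separates. The cleanest way to close your step~(3) is then to observe that each component of the Fox exterior $E(Y',M)$ is a handlebody and hence has connected boundary, so $E(Y',M)$ is a disjoint union of exactly two genus-$g$ handlebodies, one attached along each component of $\partial Y'$; collapsing $Y' \cong F \times I$ yields $M$ as a union of two genus-$g$ handlebodies along a genus-$g$ surface, which is the required Heegaard splitting.
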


\begin{proof}
The statement follows immediately if we apply Theorem~\ref{main theorem} to the neighborhood $Y:=\overline{N}(F,M)$.
\end{proof}

Note that we can apply the same idea to closed non-orientable surfaces. Recall from \cite{Rubinstein:One-Sided} that, for a closed connected $3$-manifold $M$, a pair $(M,F)$ is said to be a \emph{one-sided Heegaard splitting of $M$} if $F$ is a closed non-orientable surface such that $E(F,M)$ is a handlebody. The cross-cap genus of this one-sided splitting is defined to be that of $F$. The statement analogous to Corollary~\ref{Heeg} in this context is the following.

\begin{coro}
A closed connected $3$-manifold $M$ admits a cross-cap genus $g$ one-sided Heegaard splitting if and only if there exists a closed non-orientable surface $F$ in $M$ with cross-cap genus $g$ such that every knot in $M$ can be isotoped to lie within a regular neighborhood of $F$.
\end{coro}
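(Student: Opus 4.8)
The plan is to mimic the proof of Corollary~\ref{Heeg}, reducing the one-sided statement to Theorem~\ref{main theorem} by passing to the regular neighborhood of the non-orientable surface. The key observation is that $Y := \overline{N}(F, M)$ is a twisted $I$-bundle over the closed non-orientable surface $F$, so its boundary $\partial Y$ is the orientation double cover $\widetilde{F}$ of $F$, a single closed \emph{orientable} surface; since $\chi(\widetilde{F}) = 2\chi(F)$, its genus is exactly $g - 1$ when $F$ has cross-cap genus $g$ (so $\chi(F) = 2-g$). Thus $Y$ is a compact connected $3$-manifold with connected boundary, and the hypothesis that every knot in $M$ can be isotoped into $\overline{N}(F,M) = Y$ says precisely that $Y$ is a Bing submanifold of $M$.

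First I would record the forward direction: if $(M,F)$ is a cross-cap genus $g$ one-sided Heegaard splitting, then $E(F,M)$ is a handlebody, so $\overline{N}(F,M)$ is a Bing submanifold (indeed a Fox submanifold), and in particular every knit can be isotoped into $\overline{N}(F,M)$; this is the easy direction and requires essentially no argument beyond noting that any knot in a handlebody can be pushed off a spine, hence into the complementary neighborhood of $F$. For the converse, I would apply Theorem~\ref{main theorem} to the Bing submanifold $Y = \overline{N}(F,M)$ to obtain a Fox reimbedding $Y' \cong Y$ with $\overline{M - Y'}$ a handlebody (it is connected since $\partial Y' \cong \widetilde{F}$ is connected). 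Let $F'$ be the image of the zero-section $F$ under the reimbedding; then $F'$ is a closed non-orientable surface of cross-cap genus $g$ in $M$, and $Y' = \overline{N}(F', M)$ is a twisted $I$-bundle over $F'$, so $E(F', M)$ is isotopic to $\overline{M - Y'}$, a handlebody. Hence $(M, F')$ is a cross-cap genus $g$ one-sided Heegaard splitting.

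The one genuinely non-routine point is the identification $E(F', M) \cong \overline{M - Y'}$ and the claim that a regular neighborhood of a closed non-orientable surface embedded in an orientable $3$-manifold is automatically the twisted $I$-bundle (never the trivial one): this follows because $M$ is orientable, so $F$ is necessarily one-sided in $M$, and the normal bundle of $F$ is the (unique) nontrivial line bundle; consequently $\partial \overline{N}(F,M)$ is the orientation double cover and $\overline{N}(F,M)$ deformation-retracts to $F$, giving $E(F,M) = M - N(F,M)$ the same homeomorphism type whether we compute it before or after the reimbedding. I expect this bundle-theoretic bookkeeping — and the verification that the genus arithmetic $g(\widetilde F) = g-1$ makes $g_{\max}(\partial Y) = g-1$ so that $K$ being $(g-1)$-characteristic suffices — to be the main thing to get right, but it is standard and the rest is a direct invocation of Theorem~\ref{main theorem}, exactly parallel to Corollary~\ref{Heeg}.
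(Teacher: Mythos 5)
Your proof is correct and takes essentially the same approach as the paper, which simply invokes Theorem~\ref{main theorem} applied to $Y=\overline{N}(F,M)$ in one line; you have filled in the same details (the twisted $I$-bundle structure of the neighborhood, the genus arithmetic $g(\widetilde F)=g-1$, and the identification of the exterior of the reimbedded core surface with the complementary handlebody) correctly. The only stray remark is the aside about $(g-1)$-characteristic knots and $g_{\max}(\partial Y)$: since Theorem~\ref{main theorem} is being used as a black box, that bookkeeping is internal to its proof and not something you need to re-verify here.
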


\begin{proof}
Again, the statement follows immediately if we apply Theorem~\ref{main theorem} to the neighborhood $Y:=\overline{N}(F,M)$.
\end{proof}

\subsection{Homotopy vs. Isotopy} \label{Homotopy}

With the affirmative resolution of the \emph{Geometrization Conjecture}, particularly the \emph{Poincar\'{e} Conjecture} and the \emph{Spherical Space Form Conjecture} in three-dimension, the following statements are now known to hold.
\begin{itemize}
\item A closed connected $3$-manifold $M$ is homeomorphic to the $3$-sphere if and only if every loop in $M$ can be \emph{homotoped} to lie within a closed $3$-ball.
\item A closed connected $3$-manifold $M$ is homeomorphic to a lens space (possibly $S^3$ and $S^2 \times S^1$) if and only if there exists a solid torus $V \subset M$ such that every loop in $M$ can be \emph{homotoped} to lie within $V$.
\end{itemize}
Note that, although significantly stronger and tremendously more difficult to establish, these statements are formally similar to the characterization of $S^3$ by Bing~\cite{Bing:S3} and the characterization of lens spaces by Hass and Thompson~\cite{Hass--Thompson:Bing} respectively. The difference is that the above statements allow homotopy of loops while the statements in \cite{Bing:S3} and \cite{Hass--Thompson:Bing} use isotopy of knots.

It is then natural to ask if the statement of our Theorem~\ref{main theorem} remain valid with such a modification as well, possibly as a consequence of the resolution of the Geometrization Conjecture.

\begin{defn}
Let $M$ be a connected 3-manifold, and $Y$ be a compact connected submanifold of $M$. $Y$ is said to be a \emph{$\pi_1$-surjective submanifold} if the inclusion $i: Y \hookrightarrow M$ induces a surjection $i_*:\pi_1(Y) \twoheadrightarrow \pi_1(M)$; equivalently, $Y$ is said to be a \emph{$\pi_1$-surjective submanifold} if every loop in $M$ can be homotoped to lie within $Y$.
\end{defn}

\noindent As noted above, by the resolution of the Geometrization Conjecture, a $\pi_1$-surjective closed $3$-ball is always a Fox submanifold and a $\pi_1$-surjective solid torus always admits a Fox reimbedding. We address the following question: for any closed connected $3$-manifold $M$, does every $\pi_1$-surjective submanifold of $M$ admit a Fox reimbedding into $M$? 

\begin{prop} \label{counterexample}
There exists a closed connected $3$-manifold $M$ and a $\pi_1$-surjective submanifold $Y$ of $M$, such that $Y$ admits no Fox reimbedding into $M$. Moreover, there are infinite number of such pairs $M$ and $Y$.
\end{prop}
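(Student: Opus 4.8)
The plan is to exploit the discrepancy between the rank of the fundamental group and the Heegaard genus of a closed $3$-manifold, i.e.\ the phenomenon underlying the negative answer to Waldhausen's question. Fix a closed connected orientable $3$-manifold $M_0$ with $\mathrm{rank}\,\pi_1(M_0)=r_0<g_0:=g(M_0)$; such manifolds exist, the first examples being Seifert fibered spaces with $r_0=2$ and $g_0=3$ due to Boileau and Zieschang (and by later work of Li the gap $g_0-r_0$ may even be taken arbitrarily large). I will first produce the desired $\pi_1$-surjective submanifold inside such an $M_0$, and then pass to infinitely many examples.

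Given $M_0$ as above, choose based loops $\gamma_1,\dots,\gamma_{r_0}$ at a point $p\in M_0$ whose homotopy classes generate $\pi_1(M_0,p)$. After a small perturbation the $\gamma_i$ may be taken to be embedded circles meeting one another only at $p$, so that $G:=\gamma_1\cup\cdots\cup\gamma_{r_0}$ is a wedge of $r_0$ circles; set $Y:=\overline{N}(G,M_0)$, which is a genus $r_0$ handlebody. By construction the inclusion $Y\hookrightarrow M_0$ induces a surjection $\pi_1(Y)=F_{r_0}\twoheadrightarrow\pi_1(M_0)$, so $Y$ is a $\pi_1$-surjective submanifold: every loop in $M_0$ can be homotoped to lie within $Y$.

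Suppose, toward a contradiction, that $Y$ admits a Fox reimbedding with image $Y'$. Then $Y'$ is a genus $r_0$ handlebody, so $\partial Y'$ is a connected genus $r_0$ surface; hence $\overline{M_0-Y'}$ has connected boundary and is therefore a single genus $r_0$ handlebody, and $M_0=Y'\cup\overline{M_0-Y'}$ is a genus $r_0$ Heegaard splitting of $M_0$. This contradicts $g(M_0)=g_0>r_0$, so $Y$ admits no Fox reimbedding into $M_0$. (In particular, by Theorem~\ref{main theorem}, this $\pi_1$-surjective $Y$ is not a Bing submanifold of $M_0$, which is exactly the failure of homotopy of loops to match isotopy of knots in our setting.) To obtain infinitely many pairs, set $M_k:=M_0\#\cdots\#M_0$ ($k$ summands, $k\geq 1$). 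By additivity of Heegaard genus under connected sum (a consequence of Haken's lemma) and by Grushko's theorem, $g(M_k)=kg_0$ while $\mathrm{rank}\,\pi_1(M_k)=kr_0<kg_0$; the $M_k$ are pairwise non-homeomorphic, and the construction above applied inside each $M_k$ yields a $\pi_1$-surjective submanifold $Y_k\subset M_k$ admitting no Fox reimbedding.

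The only non-elementary ingredient here is the existence of a single closed $3$-manifold whose fundamental group has rank strictly smaller than its Heegaard genus; once that is granted, the proof is a short deduction from the genus consequences of Theorem~\ref{main theorem} already recorded. I therefore expect no substantive obstacle in proving the proposition itself; the bulk of the remaining discussion in this section will instead be spent making the connection precise in the other direction, observing that a $\pi_1$-surjective submanifold of $M$ fails to admit a Fox reimbedding precisely when it is obstructed from being assembled into a minimal-genus splitting of $M$, which is the rank--genus discrepancy and leads to the advertised characterization of counterexamples to Waldhausen's question.
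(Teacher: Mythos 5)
Your proof is correct and takes essentially the same approach as the paper: start from a closed $3$-manifold with $r(M)<g(M)$, realize a minimal generating set of $\pi_1(M)$ as a bouquet of loops, let $Y$ be its regular neighborhood (a $\pi_1$-surjective handlebody of genus $r(M)$), and derive a contradiction from the fact that a Fox reimbedding of $Y$ would yield a genus-$r(M)$ Heegaard splitting of $M$. The only divergence is in producing infinitely many pairs: the paper simply cites Schultens--Weidmann \cite{Schultens--Weidmann:GeomAlgRank}, who exhibit closed $3$-manifolds with $r(M)<g(M)=r(M)+n$ for every $n\geq 1$, whereas you generate infinitely many examples as iterated connected sums $M_0\#\cdots\#M_0$ of a single Boileau--Zieschang example, using additivity of Heegaard genus (via Haken's lemma) together with Grushko's theorem to preserve the rank--genus gap. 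Both routes are valid; yours is slightly more self-contained in that it requires only one manifold with $r<g$, though it implicitly uses the two cited additivity facts, while the paper defers the whole matter to a single reference.
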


The proposition gives the negative answer to our question above. \emph{A priori}, there is no reason for our Theorem~\ref{main theorem} to generalize to $\pi_1$-injective submanifolds. However, as the above discussion suggests, finding an example of $M$ and $Y$ as in Proposition~\ref{counterexample} can be a rather delicate task.

We present one family of examples that has already been studied in literature. For a $3$-manifold $M$, the \emph{rank} $r(M)$ is defined to be the smallest cardinality of generating set for $\pi_1(M)$. If $M$ is a closed $3$-manifold that admits a genus $g$ Heegaard splitting, then a handlebody in the splitting is $\pi_1$-surjective, and it gives rise to a generating set for $\pi_1(M)$ with $g$ elements; hence, from a minimal genus splitting in particular, it follows that the inequality $r(M) \leq g(M)$ always hold. Observing $r(M)=g(M)$ for many examples, Waldhausen \cite{Waldhausen:Problems} asked if this equality holds in general.

This question was answered negatively by Boileau and Zieschang in \cite{Boileau--Zieschang:HeegaardGenus}, where they found Seifert fibered manifolds $M$ with $r(M)=2$ and $g(M)=3$. The work of Weidmann in \cite{Weidmann:2-Generated} provides further examples of closed manifolds $M$ with $r(M)<g(M)$. Schultens and Weidmann showed in \cite{Schultens--Weidmann:GeomAlgRank} that, for any non-negative integer $n$, there exists a closed connected $3$-manifold $M$ with $r(M)<g(M)=r(M)+n$. Furthermore, very recently, Li annoounced in \cite{Li:RankGenus} that there exist hyperbolic $3$-manifolds $M$ that satisfies $r(M)<g(M)$.

\begin{proof}[Proof of Proposition~\ref{counterexample}]
Let $M$ be a closed $3$-manifold with $r(M)<g(M)$; as already mentioned, an infinite number of such manifolds are known from \cite{Schultens--Weidmann:GeomAlgRank}. Choose a base point $* \in M$, and let $\Gamma$ be the union of simple closed curves with the base point $*$, representing a minimal generating set of $\pi_1(M,*)$. Homotoping the curves if necessary, we can choose $\Gamma$ so that $\Gamma$ is a bouquet of loops and hence $Y:=\overline{N}(\Gamma,M)$ is a $\pi_1$-surjective handlebody of genus $r(M)$.

If $Y$ admits a Fox reimbedding into $M$, the reimbedded image and its exterior are both genus $r(M)$ handlebodies, giving a genus $r(M)$ Heegaard splitting $M$. Then, we have $g(M) \leq r(M)$, which clearly contradicts our choice of $M$.
\end{proof}

The proof of Proposition~\ref{counterexample} above suggests a relationship between Waldhausen's question and our Theorem~\ref{main theorem}. Refining the above arguments slightly, we give a characterization of $3$-manifolds that satisfy $r(M) < g(M)$.

\begin{prop}
Let $M$ be a closed $3$-manifold. Then, the following are equivalent:
\begin{itemize}
\item $r(M)<g(M)$;
\item For any $\pi_1$-surjective handlebody $V \subset M$ of rank $r(M)$, there is a knot $K$ in $M$ that cannot be isotoped to lie within $V$.
\end{itemize}
\end{prop}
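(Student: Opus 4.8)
The plan is to prove the two implications separately, using Theorem~\ref{main theorem} (equivalently Proposition~\ref{multiple boundary}) as the engine in both directions, together with the elementary inequality $r(M) \leq g(M)$ recalled in the discussion above.

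\medskip

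\noindent\textbf{The forward implication.} Suppose $r(M) < g(M)$, and let $V \subset M$ be any $\pi_1$-surjective handlebody with $g(V) = r(M)$. I claim $V$ cannot be a Bing submanifold of $M$. Indeed, if every knot in $M$ could be isotoped to lie within $V$, then by Theorem~\ref{main theorem} the handlebody $V$ would admit a Fox reimbedding $V' \cong V$ into $M$; then $\overline{M - V'}$ is a union of handlebodies, and since $V'$ is connected with connected boundary, $\overline{M-V'}$ is a single handlebody of genus $g(V') = r(M)$. This exhibits a genus $r(M)$ Heegaard splitting of $M$, so $g(M) \leq r(M)$, contradicting the hypothesis. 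Hence there is a knot $K$ in $M$ that cannot be isotoped into $V$, which is exactly the second statement. (This is essentially the argument already given in the proof of Proposition~\ref{counterexample}, and needs no new ideas.)

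\medskip

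\noindent\textbf{The reverse implication (contrapositive).} Suppose $r(M) = g(M)$; I must produce a $\pi_1$-surjective handlebody $V$ of rank $r(M)$ such that every knot in $M$ \emph{can} be isotoped into $V$, i.e.\ $V$ is simultaneously $\pi_1$-surjective of minimal rank and a Bing submanifold. The natural candidate is a handlebody coming from a minimal genus Heegaard splitting: since $g(M) = r(M)$, take a Heegaard splitting $M = V \cup_S W$ with $g(S) = g(M) = r(M)$, so that $V$ is a genus $r(M)$ handlebody. Then $V$ is $\pi_1$-surjective (the inclusion of a Heegaard handlebody is always $\pi_1$-surjective), and $V$ has the correct rank $r(M)$. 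It remains to check that $V$ is a Bing submanifold, i.e.\ every knot in $M$ isotopes into $V$. This is standard: a handlebody from a Heegaard splitting is a Fox submanifold (its exterior $W$ is a handlebody), and any Fox submanifold is a Bing submanifold, as remarked after the definition of Bing submanifolds; concretely, given a knot $K$, put $K$ in bridge position with respect to $S$ and then push the bridge arcs lying in $W$ across $W$ into $V$ using the product structure on a spine complement, or simply note that $K$ can be isotoped off a spine of $W$ and hence into $V = E(\mathbf{\Sigma}_W, M)$. Thus $V$ is a $\pi_1$-surjective handlebody of rank $r(M)$ into which every knot can be isotoped, which negates the second statement. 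This proves the contrapositive.

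\medskip

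\noindent\textbf{Main obstacle.} The only point requiring a little care is the reverse direction: one must exhibit a single handlebody that is both of minimal rank \emph{and} a Bing submanifold, and the trick is precisely that when $r(M) = g(M)$ the minimal-genus Heegaard handlebody does both jobs at once. Everything else is a direct application of Theorem~\ref{main theorem} and the inequality $r(M) \le g(M)$, so I expect the proof to be short.
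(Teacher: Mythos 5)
Your proof is correct and follows essentially the same route as the paper: the forward direction contradicts $r(M)<g(M)$ by producing a genus-$r(M)$ Heegaard splitting from a Fox reimbedding of $V$, and the reverse direction takes a minimal-genus Heegaard handlebody, which is simultaneously $\pi_1$-surjective of rank $r(M)$ and a Bing submanifold. The only cosmetic difference is that you invoke Theorem~\ref{main theorem} as a black box in the forward direction, whereas the paper explicitly chooses an $r(M)$-characteristic knot $K$ with $t(K) \geq g(M;\partial V)$ and applies Proposition~\ref{single boundary}, thereby naming the specific knot that cannot be isotoped into $V$.
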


\begin{proof}
Suppose $r(M) < g(M)$ and let $V \subset M$ be a $\pi_1$-surjective handlebody of rank $r(M)$. Let $K \subset M$ be a $r(M)$-caracteristic knot with $t(K) \geq g(M;\partial V)$; the existence of such a knot is given by Theorem~\ref{existence-char}, i.e. \cite[Theorem 5.1]{Kobayashi--Nishi:Bing}. If $K$ can be isotoped to lie within $V$, then $V$ admits a Fox re-imbedding by Proposition~\ref{single boundary}. As in the proof of Proposition~\ref{counterexample}, any Fox reimbedding of $V$ yields a Heegaard splitting of genus $r(M)$, and we have $r(M) \geq g(M)$ which contradicts the assumption $r(M) < g(M)$. So, this knot $K$ cannot be isotoped to lie within $V$.

For the other direction, the contrapositive statement easily holds. If $r(M) \geq g(M)$, and hence $r(M)=g(M)$, take a handlebody $V$ of genus $g(M)$ in a minimal genus Heegaard splitting of $M$; $V$ is a Bing submanifold, and any knot $K \subset M$ can be isotoped to lie within $V$.
\end{proof}


\bibliography{Bing-Fox}
\bibliographystyle{amsalpha}

\end{document}